\title{Convergence analysis of sectional methods for solving aggregation population balance equations: The fixed pivot technique}
\author{Ankik Kumar Giri \footnotemark[2]\ \footnotemark[3]
\and Erika Hausenblas \footnotemark[2]}
\begin{document}
\maketitle

\renewcommand{\thefootnote}{\fnsymbol{footnote}}

\footnotetext[2]{Institute for Applied Mathematics, Montan University Leoben, Franz Josef Stra{\ss}e 18, A-8700 Leoben, Austria({\tt ankik-kumar.giri@unileoben.ac.at})}
\footnotetext[3]{Institute for Analysis and Numerics, Otto-von-Guericke University Magdeburg, Universit\"{a}tsplatz 2, D-39106 Magdeburg, Germany}

\renewcommand{\thefootnote}{\arabic{footnote}}

%\begin{document}
%
%\maketitle

\begin{abstract}
In this paper, we introduce the
convergence analysis of the fixed pivot technique given by S.
Kumar and Ramkrishna \cite{Kumar:1996-1} for the
 nonlinear aggregation population balance equations which are of substantial interest
  in many areas of science: colloid chemistry, aerosol physics, astrophysics, polymer
  science, oil recovery dynamics, and mathematical biology. In particular,
   we investigate the convergence for five different types of uniform and non-uniform
   meshes which turns out that the fixed pivot technique is second order convergent on
    a uniform and non-uniform smooth meshes. Moreover, it yields first order convergence
    on a locally uniform mesh. Finally, the analysis exhibits that the method does not
    converge on an oscillatory and non-uniform random meshes. Mathematical results of
    the convergence analysis are also demonstrated numerically.
\end{abstract}

\begin{keywords}
Particles, Aggregation, Fixed pivot technique, Consistency, Convergence
\end{keywords}

\begin{AMS}
45J05, 65R20, 45L05
\end{AMS}

\pagestyle{myheadings}
\thispagestyle{plain}
\markboth{ANKIK KUMAR GIRI AND ERIKA HAUSENBLAS}{CONVERGENCE OF THE FIXED PIVOT TECHNIQUE FOR AGGREGATION PBEs}

\section{Introduction}
The continuous aggregation population balance equation (PBE) or Smoluchowski coagulation equation describes the kinetics of particle growth in which particles can aggregate via binary interaction to form larger particles. This model arises in many fields of science and engineering: kinetics of phase transformations in binary alloys such as segregation of binary alloys, aggregation of red blood cells in biology, fluidized bed granulation processes, aerosol physics, i.e\ the evolution of a system of solid or liquid particles suspended in a gas, formation of planets in astrophysics, polymer science and many more. The nonlinear continuous aggregation population balance equation is given by
\begin{eqnarray}\label{pbe}
\frac{\partial f(t,x)}{\partial t}  =  \frac{1}{2}\int_{0}^{x} K (x-y,y)&&f(t,x-y)f(t,y)dy\\
&&\nonumber - \int_{0}^{\infty} K(x,y)f(t,x)f(t,y)dy,
\end{eqnarray}
with
\vspace{-0.80cm}
\begin{eqnarray*}
f(x,0) = f^{\mbox{in}}(x)
\; \geq \; 0, \hspace{0.2in} x \in ]0,\infty[.
\end{eqnarray*}
where the variables $x> 0$ and $t\geq 0$ denote the size of the particles and time respectively. The number density of particles of size $x$ at time $t$ is denoted by $f(x,t)\geq 0$. The aggregation kernel $K(x,y)\geq 0$ represents the rate at which particles of size $x$ coalesce with those of size $y$. It will be assumed throughout that $K(x,y)=K(y,x)$ for all $x$, $y > 0$, i.e.\ symmetric and $K(x,y)=0$ for either $x=0$ or $y=0$.\\
  Mathematical results on existence and uniqueness of solutions to equation (\ref{pbe}) can be found in \cite{Dubovskii:1994, Dubovskii:1996, Escobedo:2003, GIRI:2010EXT, Lamb:2004, Laurencot:2000, Laurencot:2002DC, Mclaughlin:1997I, Mclaughlin:1997, Stewart:1990} for different classes of aggregation kernels. The pure aggregation PBE (\ref{pbe}) can be solved analytically only for some specific examples of aggregation kernels, see \cite{Dubovskii:1992, Fournier:2005, Fournier:2006}. In general we need to solve them numerically. To apply a numerical method, first we need to consider the following truncated form of the problem (\ref{pbe}) by taking a finite computational domain.
\begin{eqnarray}\label{trunc pbe fpt}
\frac{\partial n(t,x)}{\partial t}  =  \frac{1}{2}\int_{0}^{x} K(x-y,y)&&n(t,x-y)n(t,y)dy\\
&&\nonumber- \int_{0}^{x_{\mbox{max}}} K(x,y)n(t,x)n(t,y)dy,
\end{eqnarray}
with
\vspace{-0.80cm}
\begin{eqnarray*}
n(x,0) = n^{\mbox{in}}(x)\geq 0, \ \ x\in \Omega:=]0,x_{\mbox{max}}],
\end{eqnarray*}
 where $n(t,x)$ represents the solution to the truncated equation (\ref{trunc pbe fpt}). The existence and uniqueness of non-negative solutions for the truncated PBE (\ref{trunc pbe fpt}) has been shown in \cite{Ball:1990, Costa:1998, Dubovskii:1996, Stewart:1990}. In \cite{Dubovskii:1996, Escobedo:2003, GIRI:2010EXT, Stewart:1990}, it is proven that the sequence of solutions to the truncated problems converge weakly to the solution of the original problem in a weighted $L^1$ space as $x_{\mbox{max}} \to \infty$
 for certain classes of kernels. %We are not aware of any sharper results on error estimates.\\

 Many numerical methods have been proposed to solve the truncated aggregation PBE (\ref{trunc pbe fpt}): finite element methods \cite{Everson:1997, Nicmanis:1998, Rigopoulos:2003}, finite volume methods \cite{Filbet:2007, Filbet:2004a, Filbet:2004}, stochastic methods \cite{Eibeck:2000, Eibeck:2001, Lee:2000}, moment methods \cite{Su:2007} and sectional methods \cite{J_Kumar:2007,J_Kumar:2006, Kumar:1996-1, Kumar:1996-2}.
By implementing most of these methods, we may have a quite
satisfactory results for the number density but not for moments.
However, the moment methods give opposite results. To have a
satisfactory information for the number density distribution as
well as some selected moments, the sectional methods have become
more useful nowadays. Several authors have proposed sectional
methods for aggregation PBE: S. Kumar and Ramkrishna
\cite{Kumar:1996-1, Kumar:1996-2}, J. Kumar et al.\
\cite{J_Kumar:2007, J_Kumar:2006} and Vanni \cite{Vanni:2002}. The
fixed pivot technique given by S. Kumar and Ramkrishna is the most
extensively used sectional method. This technique also efficiently
works for a multi-dimensional size variable
\cite{S_Kumarmulti:2007}.

 Recently J. Kumar and Warnecke \cite{JKUMARNM:2008} have published the numerical analysis of the fixed pivot technique for breakage PBEs. This case was simpler due to the linearity of that equation. However the convergence analysis of the technique was still open for aggregation PBEs (\ref{trunc pbe fpt}). This was a challenging task due to the non-linearity of the equation. So the purpose of this work is to demonstrate the missing convergence analysis of the fixed pivot technique for aggregation PBEs in the literature.

 Let us now briefly outline the contents of this paper. Along with the general idea of sectional methods, a concise review of the mathematical formulation of the fixed pivot technique is given in the following section. A theorem from Hundsdorfer and Verwer \cite{Hundsdorfer:2003} used in further analysis and the main result for the convergence of the fixed pivot technique are also stated in Section \ref{sectional method}. To show the convergence of the scheme for solving aggregation PBEs (\ref{trunc pbe fpt}), the consistency and Lipschitz conditions  are discussed in Sections \ref{consistency fpt} and \ref{convergence fpt}, respectively. Numerical simulations are performed in Section \ref{numerics fpt}. And, Section \ref{conclusion} states  some conclusions.

\section{The Sectional Methods}\label{sectional method}
%\begin{figure}
%  \centering
%  \input{Generaldiscretizeddomain}
%  \caption{A discretized size domain.}\label{p:Generaldiscretizeddomain}
%\end{figure}
 The mathematical formulation of sectional methods is reviewed from \cite[Section 2]{JKUMARNM:2008}. These methods calculate the total number of particles in finite number of cells. First of all, the continuous interval $\Omega
:= ]0, x_{\mbox{max}}]$ is divided into a finite number of cells defining size classes $\Lambda_i := ]x_{i-1/2}, x_{i+1/2}], i = 1, \ldots, I$.
Set
$$x_{1/2} = 0, \quad x_{I+1/2}= x_{\mbox{max}}, \quad  \Delta x_{\mbox{min}} \leq \Delta x_i = x_{i+1/2} - x_{i-1/2} \leq \Delta x.$$
For the purpose of later analysis we assume quasi uniformity of the grids, i.e.\
\begin{equation}
 \frac{\Delta x}{\Delta x_{\mbox{min}}}\leq C, \label{grid cond}
\end{equation}

where $C$ is a positive constant. The center point of each cell $x_i = (x_{i-1/2} + x_{i+1/2})/2$, $i = 1, \ldots, I$ is called pivot or
grid point. The integration of the truncated PBE (\ref{trunc pbe fpt}) over each cell gives a \emph{semi-discrete} system in $\mathbb{R}^I$
\begin{equation}\label{E:IntegatedBreakage}
\frac{d\mathbf{N}}{dt} = \mathbf{B} - \mathbf{D}, \hspace{.1in} \mbox{with} \hspace{.1in} \mathbf{N}(0) = \mathbf{N}^{\mbox{in}},
\end{equation}
%\begin{eqnarray*}
%\mathbf{N}(0) = \mathbf{N}^{\mbox{in}}.
%\end{eqnarray*}
where $\mathbf{N}^{\mbox{in}}, \mathbf{N}, \mathbf{B}, \mathbf{D} \in \mathbb{R}^I$. The $i$th components of vectors $\mathbf{N}, \mathbf{N}^{\mbox{in}}, \mathbf{B}$, and $\mathbf{D}$ are respectively, defined as
\begin{equation}\label{E:TotalNumberRepresentation}
N_i(t) = \int_{x_{i-1/2}}^{x_{i+1/2}} n(t, x) dx, \hspace{.1in} \mbox{with} \hspace{.1in} N_i^{\mbox{in}} = \int_{x_{i-1/2}}^{x_{i+1/2}} n^{\mbox{in}}(x) dx,
\end{equation}
%\begin{eqnarray*}
%N_i^{\mbox{in}} = \int_{x_{i-1/2}}^{x_{i+1/2}} n^{\mbox{in}}(x) dx,
%\end{eqnarray*}
\begin{equation} \label{E:B_i}
B_i=\frac{1}{2}\int_{x_{i-1/2}}^{x_{i+1/2}}\int_{0}^{x}K(x-y, y)n(t,x-y)n(t,y)dy dx.
\end{equation}
and
\begin{equation}\label{E:D_i}
D_i = \int_{x_{i-1/2}}^{x_{i+1/2}}\int_{0}^{x_{I+1/2}}K(x, y)n(t,y)n(t,x)dy dx.
\end{equation}
Here the vector $\mathbf{N}$ is formed by the vector of values of the step function obtained by $L^2$ projection of the exact solution $n$
into the space of step functions constant on each cell. Note that this projection error can easily be shown of second order, see section 5.2.2 in \cite{Giri:2010thesis}. Finally, by taking numerical approximations of $B_i$ and $D_i$ in terms of $N_i(t)$, the sectional methods give the
following discretized form
\begin{equation}\label{E:DiscretizedGenBreakage}
\frac{d\hat{\mathbf{N}}}{dt} = \hat{\mathbf{B}}(\hat{\mathbf{N}}) - \hat{\mathbf{D}}(\hat{\mathbf{N}})=:\hat{\mathbf {F}}(t,\hat{\mathbf{N}}), \hspace{.1in} \mbox{with} \hspace{.1in} \hat{\mathbf{N}}(0) = \mathbf{N}^{\mbox{in}},
\end{equation}
%\begin{equation}\label{E:DiscretizedGenBreakage_IC}
%\hat{\mathbf{N}}(0) = \mathbf{N}^{\mbox{in}},
%\end{equation}

where $ \hat{\mathbf{N}}, \hat{\mathbf{B}}, \hat{\mathbf{D}}\in \mathbb{R}^I$. The total number of particles in $i$th
cell, $N_i(t)$ are numerically approximated by the $i$th component, $\hat{N}_i(t)$ of the vector $\hat{\mathbf{N}}$. The $i$th components of $\hat{\mathbf{B}}$ and $\hat{\mathbf{D}}$ are denoted by $\hat{B}_i$ and $\hat{D}_i$ respectively which are defined in spatially discretized system (\ref{resultant system}) obtained by the fixed pivot technique.

\subsection{The Fixed Pivot Technique} \label{FPT}
The fixed pivot technique relies on the following idea of birth modification. In \cite{Kumar:1996-1}, the truncated PBE (\ref{trunc pbe fpt}) is modified to
\begin{eqnarray}\label{fptmodi}
 \hspace{.6in}\frac{d}{dt} \int_{x_{i-1/2}}^{x_{i+1/2}}n(t,x)dx
 \approx &&\frac{1}{2}\int_{x_{i}}^{x_{i+1}}\lambda_i^+(x)\int_{0}^{x}K(x-y, y)n(t,x-y)n(t,y)dy dx\\
&&\nonumber+\frac{1}{2}\int_{x_{i-1}}^{x_{i}}\lambda_i^-(x)\int_{0}^{x}K(x-y, y)n(t,x-y)n(t,y)dy dx\\
&&\nonumber-\int_{x_{i-1/2}}^{x_{i+1/2}}\int_{0}^{x_{I+1/2}}K(x, y)n(t,y)n(t,x)dy dx.
\end{eqnarray}
where
%\vspace{-0.50cm}
\begin{equation}\label{values of lambda fpt}
 \lambda_i^{\pm}(x)=\frac{x-x_{i\pm 1}}{x_i-x_{i\pm 1}}, \ \ i = 1, \ldots, I.
\end{equation}
According to S. Kumar and Ramkrishna in \cite{Kumar:1996-1}, the first and second integral terms on the right hand side in (\ref{fptmodi}) set to be zero for $i=I$ and $i=1$, respectively.

Inserting the number density approximation $n(t,x)\approx \sum_{i=1}^{I}N_i(t)\delta(x-x_i)$, into the above equation, we get the following spatially discretized system
\begin{eqnarray}\label{resultant system}
 \hspace{.6in}\frac{dN_i(t)}{dt}= &&\sum_{x_i \leq x_j+x_k < x_{i+1}}^{j\geq k}\bigg(1-\frac{1}{2}\delta_{j,k}\bigg)\lambda_i^+(x_k+ x_j)K(x_k,x_{j})N_j(t)N_k(t)\\
&&+\sum_{x_{i-1} \leq x_j+x_k < x_{i}}^{j\geq k}\bigg(1-\frac{1}{2}\delta_{j,k}\bigg)\lambda_i^-(x_k+ x_j)K(x_k,x_{j})N_j(t)N_k(t)\nonumber\\
&&-N_i(t)\sum_{j=1}^{I}K(x_i, x_j)N_j(t),\nonumber\\
=&& \hat{B}_i-\hat{D}_i,\nonumber
\end{eqnarray}
where $\hat{B}_i$ and $\hat{D}_i$ represent the discretized birth and death terms, respectively in the $i\mbox{th}$ cell obtained from the fixed pivot technique. The
basic idea of the fixed pivot technique can be described
as follows. Assume that a new born particle of a size, which is not positioned at a
pivot point of any cell, appears due to the aggregation of two
smaller particles. The particle has to be assigned onto neighboring
pivot points in such a way that the particle number and mass are
conserved. This problem can be solved in a unique way. The resulting
technique gives very often quite satisfactory results. However, the
undesirable part is that the fixed pivot technique turns into
a zero order method on oscillatory and non-uniform random meshes for
aggregation problems.

It is important to mention here that at the last boundary cell S. Kumar and Ramkrishna simply set the first integral on the right hand side
in (\ref{fptmodi}) to be zero in their numerical computations. However, we have observed in our analysis for the aggregation problem that this setting at the end boundary cell reduces by one order the accuracy of the fixed pivot technique on uniform and non-uniform smooth meshes. To overcome this problem, we take an extra grid point $x_{I+1}$ at a $\Delta x_I$ distance away from the grid point $x_I$. In the computations contributions which are larger than $x_I$ are distributed to $x_I$ and $x_{I+1}$. This idea is used in Lemma \ref{consistency lem}. A similar modification should also be used at the first boundary cell for breakage problems in Lemma 3.2 of \cite{JKUMARNM:2008}. The present form of Lemma 3.2 in \cite{JKUMARNM:2008} is not correct at the first boundary cell and also reduces by one order accuracy of the fixed pivot technique for breakage PBEs on uniform and non-uniform smooth meshes.

It should be pointed out here that in this work we consider the following discrete $L_1$ norm %$1$-norm on $\mathbb{R}^I$
\vspace{-0.70cm}
\begin{eqnarray*}
 \|\mathbf{N}\|=\sum_{i=1}^{I}|N_i|.
\end{eqnarray*}
We consider $\mathcal{C}^2([a,b])$ as a space of twice
continuously differentiable functions on $]a,b[$. Note that for
the sake of simplicity in our analysis we assume that the
aggregation kernel satisfies
\begin{equation}\label{cond on beta}
\hspace{.6in}K \in \mathcal{C}^2(]0,x_{\mbox{max}}]\times
]0,x_{\mbox{max}}]).
\end{equation}

%As we move on to the subsequent sections, it should be mentioned that the definitions of \emph{spatial truncation error} and \emph{global discretization error} for the consistency and the convergence of the scheme (\ref{E:DiscretizedGenBreakage}), respectively are directly taken from \cite[Definitions 2.1 and 2.2]{JKUMARNM:2008} and the reference therein. It will also be helpful to revisit an existing theorem given in \cite{Hundsdorfer:2003}.

As we move on to the subsequent sections, it will be helpful to revisit some
definitions and an existing theorem given in Hundsdorfer and Verwer \cite{Hundsdorfer:2003}.
These will be of use while discussing in detail the consistency and the convergence of the fixed pivot technique.

Let $\|\cdot\|$ denote any norm on $\mathbb{R}^I$.
\begin{definition}
The \textbf{spatial truncation error} is defined by the residual left by substituting the exact solution
$\mathbf{N}(t)$ into equation (\ref{E:DiscretizedGenBreakage}) as
\begin{equation}\label{E:SpatialTruncationError}
\mathbf{\sigma}(t) = \frac{d {\mathbf{N}}(t)}{d t} - \left(\hat{\mathbf{B}}\left({\mathbf{N}}(t)\right) -
\hat{\mathbf{D}}\left({\mathbf{N}}(t)\right)\right).
\end{equation}
The scheme (\ref{E:DiscretizedGenBreakage}) is called consistent of order $p$ if, for $\Delta x \to 0$,
\begin{eqnarray*}
\| \mathbf{\sigma}(t) \| = {\cal O}(\Delta x^p), \quad \mbox{uniformly for all } t, \, 0 \leq t \leq T.
\end{eqnarray*}
\end{definition}

\begin{definition}
The \textbf{global discretization error} is defined by
\begin{equation}\label{E:GlobalDiscretizationError}
 \mathbf{\epsilon}(t) = \mathbf{N}(t) -\hat{\mathbf{N}}(t).
\end{equation}
 The scheme (\ref{E:DiscretizedGenBreakage}) is called convergent of order $p$ if, for $\Delta x \to 0$,
\begin{eqnarray*}
\| \mathbf{\epsilon}(t) \| = {\cal O}(\Delta x^p), \quad \mbox{uniformly for all } t,\, 0 \leq t \leq T.
\end{eqnarray*}
\end{definition}

%Let $\|\cdot\|$ denote any norm on $\mathbb{R}^I$.
%\begin{definition}
%The \textbf{spatial truncation error} is defined by the residual left by substituting the exact solution
%$\mathbf{N}(t)$ into equation (\ref{E:DiscretizedGenBreakage}) as
%\begin{equation}\label{E:SpatialTruncationError}
%\mathbf{\sigma}(t) = \frac{d {\mathbf{N}}(t)}{d t} - \left(\hat{\mathbf{B}}\left({\mathbf{N}}(t)\right) -
%\hat{\mathbf{D}}\left({\mathbf{N}}(t)\right)\right).
%\end{equation}
%The scheme (\ref{E:DiscretizedGenBreakage}) is called consistent of order $p$ if, for $\Delta x \to 0$,
%\begin{eqnarray*}
%\| \mathbf{\sigma}(t) \| = {\cal O}(\Delta x^p), \quad \mbox{uniformly for all } t, \, 0 \leq t \leq T.
%\end{eqnarray*}
%\end{definition}
%
%\begin{definition}
%The \textbf{global discretization error} is defined by
%\begin{equation}\label{E:GlobalDiscretizationError}
% \mathbf{\epsilon}(t) = \mathbf{N}(t) -\hat{\mathbf{N}}(t).
%\end{equation}
% The scheme (\ref{E:DiscretizedGenBreakage}) is called convergent of order $p$ if, for $\Delta x \to 0$,
%\begin{eqnarray*}
%\| \mathbf{\epsilon}(t) \| = {\cal O}(\Delta x^p), \quad \mbox{uniformly for all } t,\, 0 \leq t \leq T.
%\end{eqnarray*}
%\end{definition}
It is important that the solution obtained by the fixed pivot technique remains non-negative for all times. This can be easily shown by using the next well known theorem.
In the following theorem we write $\hat{\mathbf{M}}\geq 0$ for a vector $\hat{\mathbf{M}}\in \mathbb{R}^I$ if all of its components are non-negative.
\begin{theorem}\label{thm for positivity} (Hundsdorfer and Verwer \cite{Hundsdorfer:2003}).
% \text{(Hundsdorfer and Verwer \cite{Hundsdorfer:2003}).}
Suppose that $\hat{\mathbf {F}}(t,\hat{\mathbf{M}})$ defined in (\ref{E:DiscretizedGenBreakage}) is continuous and satisfies the Lipschitz condition as
\begin{eqnarray*}
 \|\hat{\mathbf{F}}(t,\hat{\mathbf{P}})-\hat{\mathbf{F}}(t,{\hat{\mathbf{M}}})\|\leq L \|\hat{\mathbf{P}}-\hat{\mathbf{M}}\|\ \ \mbox{ for all}\ \ \hat{\mathbf{P}}, \hat{\mathbf{M}}\in \mathbb{R}^I.
\end{eqnarray*}
Then the solution of the semi-discrete system (\ref{E:DiscretizedGenBreakage}) is non-negative if and only if for any vector $\hat{\mathbf{M}}\in \mathbb{R}^I$ with $\hat{\mathbf{M}}\geq 0$
we have for any $i=1,\ldots,I$ and all $t\geq 0$ that $\hat{M}_i =0$ implies $\hat{F}_i(t,\hat{\mathbf{M}}) \geq 0.$
% \begin{align*}
% \hat{\mathbf{M}} \geq 0, \ \ \hat{M}_i =0\Longrightarrow  \hat{F}_i(t,\hat{\mathbf{M}}) \geq 0.
% \end{align*}
\end{theorem}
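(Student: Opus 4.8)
The plan is to prove the two implications separately: the ``only if'' direction is an immediate one-sided differentiability argument, while the ``if'' direction requires a perturbation argument that rules out a first zero-crossing.

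For \emph{necessity}, fix a vector $\hat{\mathbf{M}}\geq 0$ with $\hat{M}_i=0$ and let $\hat{\mathbf{N}}(\cdot)$ be the solution of the ODE in (\ref{E:DiscretizedGenBreakage}) with initial datum $\hat{\mathbf{N}}(t)=\hat{\mathbf{M}}$ prescribed at time $t$; existence and uniqueness on a right neighbourhood of $t$ follow from the Lipschitz hypothesis. Since the scheme is assumed positivity preserving, $\hat{N}_i(s)\geq 0$ for $s$ slightly larger than $t$, while $\hat{N}_i(t)=0$. Hence the forward difference quotient $(\hat{N}_i(s)-\hat{N}_i(t))/(s-t)$ is non-negative, and letting $s\downarrow t$ gives $\hat{F}_i(t,\hat{\mathbf{M}})=\hat{N}_i'(t)\geq 0$, as required.

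For \emph{sufficiency}, I would introduce, for $\epsilon>0$, the perturbed problem $\tfrac{d}{dt}\hat{\mathbf{N}}^{\epsilon}=\hat{\mathbf{F}}(t,\hat{\mathbf{N}}^{\epsilon})+\epsilon\mathbf{1}$ with $\hat{\mathbf{N}}^{\epsilon}(0)=\mathbf{N}^{\mbox{in}}+\epsilon\mathbf{1}$, where $\mathbf{1}=(1,\dots,1)$; again the Lipschitz condition gives a unique solution on $[0,T]$. I claim $\hat{\mathbf{N}}^{\epsilon}(t)>0$ componentwise on $[0,T]$. If not, let $t_0$ be the infimum of times at which some component vanishes; by continuity and $\hat{\mathbf{N}}^{\epsilon}(0)>0$ we get $t_0>0$, $\hat{\mathbf{N}}^{\epsilon}(t_0)\geq 0$, and $\hat{N}_j^{\epsilon}(t_0)=0$ for some $j$. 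Applying the hypothesis of the theorem to the non-negative vector $\hat{\mathbf{N}}^{\epsilon}(t_0)$, whose $j$th component is zero, yields $\hat{F}_j(t_0,\hat{\mathbf{N}}^{\epsilon}(t_0))\geq 0$, so $\tfrac{d}{dt}\hat{N}_j^{\epsilon}(t_0)\geq\epsilon>0$. But $\hat{N}_j^{\epsilon}(t)>0$ for $t<t_0$ together with $\hat{N}_j^{\epsilon}(t_0)=0$ forces the left derivative at $t_0$ to be $\leq 0$, a contradiction; hence $\hat{\mathbf{N}}^{\epsilon}>0$ on $[0,T]$.

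Finally I would let $\epsilon\downarrow 0$. Because $\hat{\mathbf{F}}$ is Lipschitz in its second argument uniformly in $t$, the usual continuous-dependence bound obtained from Gronwall's inequality applied to $\|\hat{\mathbf{N}}^{\epsilon}(t)-\hat{\mathbf{N}}(t)\|$ shows $\hat{\mathbf{N}}^{\epsilon}\to\hat{\mathbf{N}}$ uniformly on $[0,T]$, where $\hat{\mathbf{N}}$ solves the unperturbed system (\ref{E:DiscretizedGenBreakage}). Since each $\hat{\mathbf{N}}^{\epsilon}(t)$ is non-negative and the non-negative orthant is closed, $\hat{\mathbf{N}}(t)\geq 0$ for all $t\in[0,T]$, which finishes the proof. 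The delicate point is the first-zero-time step: one must arrange that the full vector $\hat{\mathbf{N}}^{\epsilon}(t_0)$ is non-negative (so that the sign hypothesis is applicable) while a single component is exactly zero with non-positive left derivative, and it is precisely the strict perturbation $\epsilon\mathbf{1}$ that manufactures the sign gap turning a non-strict inequality into a genuine contradiction.
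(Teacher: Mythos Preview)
Your argument is correct and is essentially the standard perturbation proof that appears in Hundsdorfer and Verwer \cite{Hundsdorfer:2003}, Chapter~1, Theorem~7.1. Note, however, that the paper does not actually prove this theorem: its ``proof'' consists solely of the sentence ``The proof can be found in \cite{Hundsdorfer:2003}, Chap.~$1$, Theorem~$7.1$.'' So you have supplied a complete argument where the paper gives only a citation; there is nothing further to compare, and your proof matches the one in the cited source.
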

\begin{proof}
 The proof can be found in \cite{Hundsdorfer:2003}, Chap. $1$, Theorem $7.1$.
\end{proof}

Now we shall state the main result which helps us to show the convergence of the fixed pivot technique.
\begin{theorem}\label{convergence theorem}
Let us assume that the Lipschitz conditions on $\hat{\mathbf{B}}({\mathbf{N}}(t))$ and $\hat{\mathbf{D}}({\mathbf{N}}(t))$ are satisfied for $0 \leq t \leq T$  and for all $\mathbf{N}$, $\hat{\mathbf{N}}\in \mathbb{R}^I$ where $\mathbf{N}$ and $\hat{\mathbf{N}}$ are the projected exact and numerical solutions defined in (\ref{E:IntegatedBreakage}) and (\ref{E:DiscretizedGenBreakage}) respectively. Then a consistent discretization method is  also convergent and the convergence is of the same order as the consistency.
\end{theorem}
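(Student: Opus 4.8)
The plan is to follow the standard Lax-type argument adapted to the ODE setting: consistency plus stability (here provided by the assumed Lipschitz conditions) gives convergence. First I would subtract the discretized system (\ref{E:DiscretizedGenBreakage}) evaluated at the numerical solution $\hat{\mathbf{N}}(t)$ from the defining relation of the spatial truncation error (\ref{E:SpatialTruncationError}), so that the global discretization error $\mathbf{\epsilon}(t) = \mathbf{N}(t) - \hat{\mathbf{N}}(t)$ satisfies
\begin{equation*}
\frac{d\mathbf{\epsilon}(t)}{dt} = \big(\hat{\mathbf{B}}(\mathbf{N}(t)) - \hat{\mathbf{B}}(\hat{\mathbf{N}}(t))\big) - \big(\hat{\mathbf{D}}(\mathbf{N}(t)) - \hat{\mathbf{D}}(\hat{\mathbf{N}}(t))\big) + \mathbf{\sigma}(t),
\end{equation*}
with $\mathbf{\epsilon}(0) = \mathbf{N}^{\mathrm{in}} - \mathbf{N}^{\mathrm{in}} = 0$ by the choice of initial data in both systems.

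Next I would pass to the integral form $\mathbf{\epsilon}(t) = \int_0^t \big[\hat{\mathbf{B}}(\mathbf{N}(s)) - \hat{\mathbf{B}}(\hat{\mathbf{N}}(s)) - \hat{\mathbf{D}}(\mathbf{N}(s)) + \hat{\mathbf{D}}(\hat{\mathbf{N}}(s)) + \mathbf{\sigma}(s)\big]\,ds$ and take the discrete $L_1$ norm. Using the triangle inequality, the assumed Lipschitz bounds on $\hat{\mathbf{B}}$ and $\hat{\mathbf{D}}$ (say with constants $L_B$ and $L_D$), and the consistency estimate $\|\mathbf{\sigma}(s)\| = \mathcal{O}(\Delta x^p)$ uniformly on $[0,T]$, I obtain
\begin{equation*}
\|\mathbf{\epsilon}(t)\| \leq (L_B + L_D)\int_0^t \|\mathbf{\epsilon}(s)\|\,ds + C\,T\,\Delta x^p
\end{equation*}
for a suitable constant $C$ and all $0 \leq t \leq T$. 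Applying Gronwall's inequality then yields $\|\mathbf{\epsilon}(t)\| \leq C\,T\,\Delta x^p\, e^{(L_B+L_D)T}$ uniformly in $t \in [0,T]$, which is precisely convergence of order $p$, the same order $p$ as the consistency. Since the right-hand side $\hat{\mathbf{F}}$ is a finite sum of products of components of $\hat{\mathbf{N}}$ with bounded kernel values and bounded coefficients $\lambda_i^{\pm}$, it is locally Lipschitz, so the semidiscrete problem (\ref{E:DiscretizedGenBreakage}) has a well-defined solution on $[0,T]$ and the manipulations above are justified.

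The only genuinely delicate point is the uniform-in-time control: one must ensure that the Lipschitz constants $L_B$, $L_D$ can be taken independent of $\Delta x$ and that the solutions $\mathbf{N}(t)$ and $\hat{\mathbf{N}}(t)$ remain in a fixed bounded region of $\mathbb{R}^I$ for all $t \in [0,T]$ (so that a \emph{local} Lipschitz bound is effectively global along the relevant trajectories). This is handled by the a priori mass/number bounds on the solutions together with the quasi-uniformity assumption (\ref{grid cond}) on the mesh, and the Lipschitz property itself is exactly what is assumed in the hypothesis of the theorem; the substantive verification that these Lipschitz constants are uniform in $\Delta x$ is deferred to Section \ref{convergence fpt}. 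Given that, the argument is the routine Gronwall estimate sketched above.
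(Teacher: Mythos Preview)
Your proposal is correct and follows essentially the same route as the paper's proof: derive the error equation $\frac{d\epsilon}{dt}=\sigma+(\hat{\mathbf{B}}(\mathbf{N})-\hat{\mathbf{B}}(\hat{\mathbf{N}}))-(\hat{\mathbf{D}}(\mathbf{N})-\hat{\mathbf{D}}(\hat{\mathbf{N}}))$ with $\epsilon(0)=0$, apply the Lipschitz bounds, and close with Gronwall's inequality. The only cosmetic differences are that the paper uses a single constant $2L$ (since both Lipschitz constants in Section~\ref{convergence fpt} equal $2N_T^0 L$) rather than $L_B+L_D$, and it takes the norm before integrating whereas you integrate first and then take the norm; your ordering is in fact slightly cleaner.
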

\begin{proof}
Using the equations (\ref{E:SpatialTruncationError}) and (\ref{E:GlobalDiscretizationError}) we have for $\mathbf{\epsilon}(t) = \mathbf{N}(t) -\hat{\mathbf{N}}(t)$
\begin{eqnarray*}
  \frac{d}{dt}\epsilon(t)=\sigma(t)+(\hat{\mathbf{B}}({\mathbf{N}})-\hat{\mathbf{B}}({\mathbf{\hat{N}}}))-(\hat{\mathbf{D}}({\mathbf{N}})-\hat{\mathbf{D}}({\mathbf{\hat{N}}})).
 \end{eqnarray*}
We then take the norm on both sides to get
\begin{eqnarray*}
  \frac{d}{dt}\|\epsilon(t)\|\leq\|\sigma(t)\|+\|(\hat{\mathbf{B}}({\mathbf{N}})-\hat{\mathbf{B}}({\mathbf{\hat{N}}}))\|+\|(\hat{\mathbf{D}}({\mathbf{N}})-\hat{\mathbf{D}}({\mathbf{\hat{N}}}))\|.
 \end{eqnarray*}

Integrating with respect to $t$ with $\epsilon(0)=0$ and using the Lipschitz conditions (\ref{lipschitz condition for B fpt})-(\ref{lipschitz condition for D fpt}) we obtain the estimates
\begin{eqnarray*}
 \|\epsilon(t)\|\leq \int_{0}^{t}\|\sigma(\tau)\|d \tau+ 2L \int_{0}^{t}\|\epsilon(\tau)\|d \tau.
\end{eqnarray*}

From this it follows by Gronwall's lemma that
\begin{equation}
 \|\epsilon(t)\|\leq \frac{e_h}{2L}[\exp{(2Lt)}-1],
\end{equation}
where
\begin{eqnarray*}
 e_h=\max_{0\leq t \leq T} \|\sigma(t)\|.
\end{eqnarray*}

If the scheme is consistent then $\lim_{h\to 0}e_h=0$. This completes the proof of the Theorem \ref{convergence theorem}.
\end{proof}

Note that the proof of the Theorem \ref{convergence theorem} is motivated by a convergence result in Linz \cite{Linz:1975}.

To fulfill the requirements of Theorem \ref{convergence theorem}, for the convergence of the fixed pivot technique we need to prove that the scheme is consistent as well as the birth $\hat{\mathbf{B}}({\mathbf{N}}(t))$ and death $\hat{\mathbf{D}}({\mathbf{N}}(t))$ terms satisfy the Lipschitz conditions.

\section{Consistency}\label{consistency fpt}
We need the following lemma to investigate the consistency of the fixed pivot technique for aggregation PBEs (\ref{trunc pbe fpt}) which is the same as Lemma 3.2 in \cite{JKUMARNM:2008} except at the boundary cells.

\begin{lemma}\label{consistency lem}
Consider a function $f\in \mathcal{C}^2([0,x_{\mbox{max}}])$ and a cell centered partitioning of the domain $[0,x_{\mbox{max}}]$ given as $0=x_{1-1/2}<\ldots<x_{i-1/2}<x_{i+1/2}<\ldots<x_{I+1/2}=x_{\mbox{max}}$ with pivot points $x_i=(x_{i-1/2}+x_{i+1/2})/2$ and a bound $\Delta x \geq \Delta x_i=(x_{i+1/2}-x_{i-1/2})$ for all $i$. If $\lambda_i^+(x)$ and $\lambda_i^-(x)$ are given by the definition (\ref{values of lambda fpt}), then the following expressions can be obtained for the modification error
\begin{eqnarray*}
\mathfrak{I}_i(f) =&&\int_{x_{i-1/2}}^{x_{i+1/2}} f(x)dx-\int_{x_{i}}^{x_{i+1}} \lambda_i^+(x) f(x)dx -\int_{x_{i-1}}^{x_{i}} \lambda_i^-(x) f(x)dx\\
=&&\frac{f(x_i)}{2}\bigg[\Delta x_i-\bigg(\frac{\Delta x_{i-1}+\Delta x_{i+1}}{2}\bigg)\bigg]\\
&&-\frac{f'(x_i)}{12}\bigg[(\Delta x_{i+1}-\Delta x_{i-1})\bigg\{\Delta x_i+\bigg(\frac{\Delta x_{i-1}+\Delta x_{i+1}}{2}\bigg)\bigg\}\bigg]\\
&&+{\cal O}(\Delta x^3), \ \ \mbox{for}\ \ i=2,\ldots, I-1,
\end{eqnarray*}

\begin{eqnarray*}
\mathfrak{I}_i(f) &&=\int_{x_{i-1/2}}^{x_{i+1/2}} f(x)dx-\int_{x_{i}}^{x_{i+1}} \lambda_i^+(x) f(x)dx - \int_{x_{i-1}}^{x_{i}} \lambda_i^-(x) f(x)dx\\
&& = \frac{f(x_i)}{4}[\Delta x_i-\Delta x_{i-1}]+{\cal O}(\Delta x^2),\ \ \mbox{for}\ \ i=I,
\end{eqnarray*}
\begin{eqnarray*}
\mathfrak{I}_i(f) &&=\int_{x_{i-1/2}}^{x_{i+1/2}} f(x)dx-\int_{x_{i}}^{x_{i+1}} \lambda_i^+(x) f(x)dx\\
 &&= \frac{f(x_i)}{4}[3 \Delta x_i-\Delta x_{i+1}]+{\cal O}(\Delta x^2),\ \ \mbox{for}\ \ i=1.
\end{eqnarray*}
\end{lemma}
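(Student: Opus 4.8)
The plan is to prove the three expansions by Taylor-expanding $f$ about the pivot $x_i$ and carefully evaluating the weighted integrals that define $\mathfrak{I}_i(f)$. First I would write $f(x) = f(x_i) + f'(x_i)(x-x_i) + \tfrac12 f''(\xi)(x-x_i)^2$ with the remainder uniformly bounded since $f\in\mathcal{C}^2$; because every integration interval appearing in $\mathfrak{I}_i$ has length $\mathcal{O}(\Delta x)$ and the weights $\lambda_i^\pm$ are bounded by $1$, the quadratic remainder contributes $\mathcal{O}(\Delta x^3)$ for the interior cells (and $\mathcal{O}(\Delta x^2)$ for the boundary cells, where one order is lost because the leading term need not cancel). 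So the real content is computing the contributions of the constant term $f(x_i)$ and the linear term $f'(x_i)(x-x_i)$.

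For the interior case $i=2,\ldots,I-1$, I would compute the three elementary integrals $\int_{x_{i-1/2}}^{x_{i+1/2}} 1\,dx$, $\int_{x_{i-1/2}}^{x_{i+1/2}}(x-x_i)\,dx$, and the two weighted integrals $\int_{x_i}^{x_{i+1}}\lambda_i^+(x)\,dx$, $\int_{x_i}^{x_{i+1}}\lambda_i^+(x)(x-x_i)\,dx$ and their $\lambda_i^-$ counterparts. Using $\lambda_i^+(x) = (x - x_{i+1})/(x_i - x_{i+1})$ and the substitution $x = x_i + s$, one gets $x_{i+1} - x_i = (\Delta x_i + \Delta x_{i+1})/2$ and $x_i - x_{i-1} = (\Delta x_{i-1} + \Delta x_i)/2$, so that $\int_{x_i}^{x_{i+1}}\lambda_i^+\,dx = \tfrac12(x_{i+1}-x_i)$ and $\int_{x_i}^{x_{i+1}}\lambda_i^+(x)(x-x_i)\,dx = \tfrac16(x_{i+1}-x_i)^2$, with sign-adjusted analogues on the left. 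Assembling the $f(x_i)$-coefficient gives $\Delta x_i - \tfrac12\big[(x_{i+1}-x_i)+(x_i-x_{i-1})\big] = \Delta x_i - \tfrac12\cdot\tfrac12(\Delta x_{i-1}+2\Delta x_i+\Delta x_{i+1})$, which after simplification is exactly $\tfrac12[\Delta x_i - \tfrac12(\Delta x_{i-1}+\Delta x_{i+1})]$. The $f'(x_i)$-coefficient is $\int_{x_{i-1/2}}^{x_{i+1/2}}(x-x_i)\,dx - \tfrac16(x_{i+1}-x_i)^2 + \tfrac16(x_i-x_{i-1})^2$; since $\int_{x_{i-1/2}}^{x_{i+1/2}}(x-x_i)\,dx = 0$ by the cell-centered choice of pivot, this is $\tfrac16\big[(x_i-x_{i-1})^2 - (x_{i+1}-x_i)^2\big]$, which factors as $\tfrac16\cdot\big[(x_i-x_{i-1})-(x_{i+1}-x_i)\big]\big[(x_i-x_{i-1})+(x_{i+1}-x_i)\big]$; substituting the half-sum expressions for the factors yields $-\tfrac{1}{12}(\Delta x_{i+1}-\Delta x_{i-1})\big[\Delta x_i + \tfrac12(\Delta x_{i-1}+\Delta x_{i+1})\big]$, matching the claim.

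For the boundary cells I would use the stated modification: at $i=I$ the extra node $x_{I+1}$ sits at distance $\Delta x_I$ from $x_I$, so the $\lambda_I^+$ integral runs over $[x_I, x_{I+1}]$ with $x_{I+1}-x_I = \Delta x_I$; at $i=1$ only the $\lambda_1^+$ term is present (the $\lambda_i^-$ term being set to zero by convention) and $x_1 - x_{1/2} = \Delta x_1/2$, $x_2 - x_1 = (\Delta x_1+\Delta x_2)/2$. In both cases I would again collect the $f(x_i)$-coefficient from the same elementary integrals; for $i=I$ it comes out to $\tfrac14(\Delta x_I - \Delta x_{I-1})$ and for $i=1$ to $\tfrac14(3\Delta x_1 - \Delta x_2)$, while the linear term now only contributes at $\mathcal{O}(\Delta x^2)$ and is absorbed into the error term together with the quadratic remainder. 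The main obstacle I anticipate is purely bookkeeping: keeping the half-interval conventions $x_{1/2}=0$, the asymmetric boundary stencils, and the factors-of-two from $x_{i\pm1}-x_i$ straight, and verifying that the leading $f(x_i)$ term does \emph{not} vanish at the boundary (which is precisely why the boundary accuracy is one order lower). No deep idea is required beyond careful Taylor expansion; the algebra is the whole job.
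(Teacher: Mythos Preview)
Your proposal is correct and follows essentially the same route as the paper: Taylor-expand $f$ about the pivot $x_i$, then evaluate the resulting elementary weighted integrals. The only cosmetic difference is that for the two boundary cells the paper phrases the computations in quadrature-rule language (midpoint rule on $\int_{x_{i-1/2}}^{x_{i+1/2}}f$, left/right rectangle rules on the $\lambda_i^\pm$-weighted integrals) rather than via your direct Taylor coefficients, but the underlying arithmetic is identical and your version is arguably cleaner.
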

Note that $x_{I+1}$ is the extra grid point introduced in section \ref{FPT}.
% \begin{proof}
% The proof can be found in \cite{JKUMARNM:2008} by applying Taylor expansions to the following expressions that we introduce here in order to use this notation later
% \begin{align*}
% \mathfrak{J}_i(f)=\int_{x_{i-1/2}}^{x_{i+1/2}} f(x)dx-\bigg(\int_{x_{i}}^{x_{i+1}} \lambda_i^+(x) f(x)dx +\int_{x_{i-1}}^{x_{i}} \lambda_i^-(x) f(x)dx\bigg)\ \ \text{for}\ \ i=2,\ldots, I-1,
% \end{align*}
%
% \begin{align*}
% \mathfrak{J}_1(f)=\int_{x_{i-1/2}}^{x_{i+1/2}} f(x)dx-\int_{x_{i}}^{x_{i+1}} \lambda_i^+(x) f(x)dx \ \ \text{for}\ \ i=1,
% \end{align*}
% and
% \begin{align*}
% \mathfrak{J}_I(f)=\int_{x_{i-1/2}}^{x_{i+1/2}} f(x)dx-\int_{x_{i}}^{x_{i+1}} \lambda_i^-(x) f(x)dx \ \ \text{for}\ \ i=I.
% \end{align*}
%
% \end{proof}
%
\begin{proof}
First, we consider the modification error $\mathfrak{I}_i$ for $i = 2,\ldots I-1$ as follows
$$
\mathfrak{I}_i(f) = \int_{x_{i-1/2}}^{x_{i+1/2}} f(x)dx - \bigg(\int_{x_i}^{x_{i+1}} \lambda_i^+(x) f(x) dx +
\int_{x_{i-1}}^{x_i} \lambda_i^-(x) f(x) dx \bigg).
$$
 Taylor series expansion of $f(x)$ about $x_i$ in $\mathfrak{I}_i$ yields
\begin{eqnarray*}
\mathfrak{I}_i(f) = &&  f(x_i) \bigg[\Delta x_i - \bigg(\int_{x_i}^{x_{i+1}} \lambda_i^+(x) dx + \int_{x_{i-1}}^{x_i}
\lambda_i^-(x) dx\bigg) \bigg] \\ && - f'(x_i) \bigg(\int_{x_i}^{x_{i+1}} \lambda_i^+(x)(x-x_i) dx +
\int_{x_{i-1}}^{x_i} \lambda_i^-(x)(x-x_i)dx\bigg) + {\cal O}(\Delta x^3).
\end{eqnarray*}
Substituting the values of $\lambda_i^+$ and $\lambda_i^-$  from (\ref{values of lambda fpt}) into the preceding equation, we obtain
\begin{eqnarray} \label{E:LS}
\mathfrak{I}_i(f) = && f(x_i) [\Delta x_i - \frac{1}{2} (x_{i+1} - x_{i-1} )]  \\
&&\nonumber -\frac{f'(x_i)}{6} \bigg[(x_{i+1} - x_{i-1}) \{ (x_{i+1} - x_i) - (x_i-x_{i-1})\}\bigg] + {\cal O}(\Delta x^3).
\end{eqnarray}
For the cell centered grids, i.e.\ $x_i = (x_{i-1/2}+x_{i+1/2})/2$, the equation (\ref{E:LS}) becomes
\begin{eqnarray*}
\mathfrak{I}_i(f) = && \frac{f(x_i)}{2} [\Delta x_i - \frac{1}{2} (\Delta x_{i+1} + \Delta x_{i-1}
)]\\ && - \frac{f'(x_i)}{12} \bigg[( \Delta x_{i+1} - \Delta x_{i-1})\bigg\{ \Delta
x_i + \bigg(\frac{\Delta x_{i-1} + \Delta x_{i+1}} {2} \bigg) \bigg\}\bigg] + {\cal O}(\Delta x^3).
\end{eqnarray*}
The expression of modification error at last boundary cell i.e.\
$i = I$ is given by
\begin{eqnarray*}
\mathfrak{I}_I(f) =  \int_{x_{I-1/2}}^{x_{I+1/2}} f(x) dx- \int_{x_{I}}^{x_{I+1}} \lambda_I^+(x) f(x) dx - \int_{x_{I-1}}^{x_{I}} \lambda_I^-(x) f(x) dx.
\end{eqnarray*}

Applying the midpoint, left rectangle and right rectangle rules in first, second and thirds integrals
respectively, we get
\begin{eqnarray*}
\mathfrak{I}_I(f) = && f(x_I) \Delta x_I -\bigg[\lambda_I^+(x_I) f(x_I) (x_{I+1}-x_{I})-\frac{f(x_I)}{2} (x_{I+1}-x_{I})\bigg]\\
&&- \bigg[\lambda_I^-(x_I) f(x_I) (x_I-x_{I-1})-\frac{f(x_I)}{2} (x_{I}-x_{I-1})\bigg] + {\cal O}(\Delta x^2)\\
=&& f(x_I) \Delta x_I - \frac{f(x_I)}{2} (x_{I+1}-x_{I}) - \frac{f(x_I)}{2} (x_{I}-x_{I-1})+ {\cal O}(\Delta x^2)\\
               = && f(x_I) \left[ \Delta x_I -\frac{1}{2}\Delta x_I- \frac{1}{4}(\Delta x_I + \Delta x_{I-1}) \right] + {\cal O}(\Delta
               x^2)\\
               = && \frac{f(x_I)}{4}\left(\Delta x_I - \Delta x_{I-1}\right) + {\cal O}(\Delta
               x^2).
\end{eqnarray*}
Now we consider $i=1$
\begin{eqnarray*}
\mathfrak{I}_1(f)=\int_{x_{1/2}}^{x_{3/2}} f(x) dx- \int_{x_{1}}^{x_{2}} \lambda_I^+(x) f(x) dx.
\end{eqnarray*}
We apply midpoint and left rectangle rules in first and second integrals
respectively to obtain
\begin{eqnarray*}
\mathfrak{I}_1(f) = && f(x_1)\Delta x_1-\bigg[\lambda_1^+(x_1) f(x_1) (x_{2}-x_{1})-\frac{f(x_1)}{2}(x_{2}-x_{1})\bigg] + {\cal O}(\Delta x^2)\\
=&& f(x_1)\bigg[\Delta x_1- \frac{1}{4}(\Delta x_1+ \Delta x_2)\bigg]\\
= && \frac{f(x_1)}{4}(3 \Delta x_1-\Delta x_2).
\end{eqnarray*}
Hence the proof of Lemma \ref{consistency lem} is completed.
\end{proof}

Now the four main sections are described to inquire the consistency of the fixed pivot technique.
   First, we evaluate the discretization error in the birth and death terms in Sections \ref{order of the birth term} and \ref{Order of the death term}, respectively. Then one can summarize all terms in section \ref{summary of all terms}. Finally, the five different types of meshes are considered to evaluate the local discretization error in Section \ref{Meshes}.

\subsection{Discretization error in the birth term}\label{order of the birth term}
%Now we have to evaluate the local discretization error of the integrated birth term.
 The integrated birth term of aggregation PBE (\ref{trunc pbe fpt}) over $i$th cell can be written as follows
\begin{eqnarray*}
B_i=\frac{1}{2}\int_{x_{i-1/2}}^{x_{i+1/2}}\int_{0}^{x}K(x-y, y)n(t,x-y)n(t,y)dy dx.
\end{eqnarray*}
Let us denote
\begin{equation}\label{def of f}
f(t,x)=\frac{1}{2}\int_{0}^{x}K(x-y, y)n(t,x-y)n(t,y)dy.
\end{equation}
\subsubsection{Birth term on internal cells}
Considering $i=2,\ldots, I-1$ and using Lemma \ref{consistency lem}, we can rewrite $B_i$ as
\begin{eqnarray*}
B_i=&&\frac{1}{2}\int_{x_{i}}^{x_{i+1}}\lambda_i^+(x)\int_{0}^{x}K(x-y,y)n(t,x-y)n(t,y)dy dx\\
&&+\frac{1}{2}\int_{x_{i-1}}^{x_{i}}\lambda_i^-(x)\int_{0}^{x}K(x-y, y)n(t,x-y)n(t,y)dy dx + \mathfrak{J}_i(f).
\end{eqnarray*}
By changing the order of integration of the first two terms on the right hand side, we obtain
\vspace{-.40cm}
\begin{eqnarray}\label{birth term}
\hspace{.6in}B_i=&&\frac{1}{2}\sum_{j=1}^{i-1}\int_{x_{j-1/2}}^{x_{j+1/2}}\int_{x_i}^{x_{i+1}}\lambda_i^+(x)K(x-y,y)n(t,x-y)n(t,y) dx dy\\
&&+\frac{1}{2}\int_{x_{i-1/2}}^{x_{i}}\int_{x_i}^{x_{i+1}}\lambda_i^+(x)K(x-y,y)n(t,x-y)n(t,y) dx dy\nonumber\\
&&+\frac{1}{2}\int_{x_i}^{x_{i+1}}\int_{y}^{x_{i+1}}\lambda_i^+(x)K(x-y, y)n(t,x-y)n(t,y)dx dy\nonumber\\
&&+\frac{1}{2}\sum_{j=1}^{i-2}\int_{x_{j-1/2}}^{x_{j+1/2}}\int_{x_{i-1}}^{x_{i}}\lambda_i^-(x)K(x-y,y)n(t,x-y)n(t,y) dx dy\nonumber\\
&&+\frac{1}{2}\int_{x_{i-3/2}}^{x_{i-1}}\int_{x_{i-1}}^{x_{i}}\lambda_i^-(x)K(x-y, y)n(t,x-y)n(t,y) dx dy\nonumber\\
&&\nonumber+\frac{1}{2}\int_{x_{i-1}}^{x_{i}}\int_{y}^{x_{i}}\lambda_i^-(x)K(x-y, y)n(t,x-y)n(t,y) dx dy + \mathfrak{J}_i(f).
\end{eqnarray}
Let us denote the integral terms on the right hand side in (\ref{birth term}) by $I_1, \ldots, I_6$ respectively and calculate them separately.
\paragraph{The integrals $I_6$ and $I_3$}
First we consider the integral $I_6$ as follows
\begin{eqnarray*}
I_6=\frac{1}{2}\int_{x_{i-1}}^{x_{i}}g_i(t,y)dy,
\end{eqnarray*}
where
\vspace{-.40cm}
\begin{equation} \label{g}
g_i(t,y):= \int_{y}^{x_{i}}\lambda_i^-(x)K(x-y, y)n(t,x-y)n(t,y) dx.
\end{equation}

Using Taylor series expansion of $g(t, y)$ with respect to $y$ about $x_{i-1}$, we get
\begin{equation}\label{first I_6}
\hspace{.6in}I_6=g_i(t,x_{i-1})\frac{(\Delta x_{i-1}+ \Delta x_{i})}{4}+ g'_{i}(t,x_{i-1})\frac{(\Delta x_{i-1}+ \Delta x_{i})^2}{16}+ \ldots,
\end{equation}
where
\vspace{-.70cm}
\begin{eqnarray*}
 g'_i(t,x_{i-1})=\frac{\partial g_i}{\partial y}(t,x_{i-1})=\frac{\partial g_i}{\partial y}(t, y)|_{y=x_{i-1}}.
\end{eqnarray*}

Now we calculate $g'_{i}(t,x_{i-1})$ using the Leibniz rule to differentiate under an integral. Then, we apply the left rectangle rule to the integrals in $g'_{i}(t,x_{i-1})$ and $g_i(t,x_{i-1})$ defined in (\ref{g}). Further, we insert $\lambda_i^-(x_{i-1})=0$ and $K(0, x_{i-1})=0$ to obtain
\begin{eqnarray*}
g_i(t,x_{i-1})= 0+{\cal O}(\Delta x^2) \ \ \mbox{and}\ \ g'_{i}(t,x_{i-1})=0+{\cal O}(\Delta x^2).
\end{eqnarray*}

%Let us evaluate $g_i(t,x_{i-1})$ from the equation (\ref{g}) as
%\begin{eqnarray*}
%g_i(t,x_{i-1})= \int_{x_{i-1}}^{x_{i}}\lambda_i^-(x)K(x-x_{i-1}, x_{i-1})n(t,x-x_{i-1})n(t,x_{i-1}) dx.
%\end{eqnarray*}
%
%Application of the left rectangle rule with $\lambda_i^-(x_{i-1})=0$ and $K(0, x_{i-1})=0$ gives
%\begin{eqnarray*}
%g_i(t,x_{i-1})= 0+{\cal O}(\Delta x^2).
%\end{eqnarray*}
Thus, by substituting the value of $g'_{i}(t,x_{i-1})$ and $g_i(t,x_{i-1})$ in (\ref{first I_6}), we have
\begin{equation}\label{second I_6}
I_6= 0 + {\cal O}(\Delta x^3).
\end{equation}

Similarly, we can evaluate the integral $I_3$ as
\begin{equation}\label{second I_3}
I_3= 0 + {\cal O}(\Delta x^3).
\end{equation}

\paragraph{The integrals $I_1$ and $I_4$}

Let us next consider the first integral $I_1$ from equation (\ref{birth term}) and apply the midpoint rule to the outer integral. Furthermore we can use the relationship $N_j(t)=n(t,x_j)\Delta x_j + {\cal O}(\Delta x^3)$ for the midpoint rule and substitute $x-x_j=x'$ to obtain
\begin{equation} \label{first I_1}
I_1=\frac{1}{2}\sum_{j=1}^{i-1}N_j(t)\int_{x_i-x_j}^{x_{i+1}-x_j}\lambda_i^+(x'+ x_j)K(x',x_{j})n(t,x') dx' +{\cal O}(\Delta x^3).
\end{equation}

We define $l_{i,j}$ and $\gamma_{i,j}$ to be those indices such that the following hold
\begin{equation}\label{defi of index fpt}
 x_i-x_j \in \Lambda_{l_{i,j}}\ \ \mbox{and} \ \ \gamma_{i,j}:=H [(x_i-x_j)- x_{l_{i,j}}]
\end{equation}
\vspace{-.70cm}
where
\begin{eqnarray*}
H(x):=
\left\{
    \begin{array}{ll}
        1  & \mbox{if } x>0, \\
        -1 & \mbox{if } x\leq 0.
    \end{array}
\right.
\end{eqnarray*}

We will use the convention of Riemann integration that $\int_{a}^{b}f(x)dx=-\int_{b}^{a}f(x)dx$ and the
equation (\ref{first I_1}) can be rewritten as
\begin{eqnarray*}
I_1=&&\frac{1}{2}\sum_{j=1}^{i-1}N_j(t)\int_{x_i-x_j}^{x_{l_{i,j}+ \frac{1}{2}\gamma_{i,j}}}\lambda_i^+(x'+ x_j)K(x',x_{j})n(t,x') dx'\\
&&+\frac{1}{2}\sum_{j=1}^{i-1}N_j(t)\sum_{k=l_{i,j}+ \frac{1}{2}(1+\gamma_{i,j})}^{l_{i+1,j}+ \frac{1}{2}(\gamma_{i+1,j}-1)}\int_{x_{k-{1/2}}}^{x_{k+{1/2}}}\lambda_i^+(x'+ x_j)K(x',x_{j})n(t,x') dx'\\
&&+\frac{1}{2}\sum_{j=1}^{i-1}N_j(t)\int_{x_{l_{i+1,j}+ \frac{1}{2}\gamma_{i+1,j}}}^{x_{i+1}-x_j}\lambda_i^+(x'+ x_j)K(x',x_{j})n(t,x') dx'+{\cal O}(\Delta x^3).
\end{eqnarray*}

Let $p$ be the total number of terms in the following sum
\begin{eqnarray*}
\sum_{k={l_{i,j}+\frac{1}{2}(\gamma_{i,j}+1)}}^{l_{i+
1,j}+\frac{1}{2}(\gamma_{i+1,j}-1)}\int_{x_{k-1/2}}^{x_{k+1/2}}\lambda_i^+(x'+
x_j)K(x',x_j)n(t,x')dx'.
\end{eqnarray*}
In particular, let $p:= \#  \{ n:
l_{i,j}+\frac{1}{2}(\gamma_{i,j}+1) \leq n \leq
l_{i+1,j}+\frac{1}{2}(\gamma_{i+1,j}-1)\}$
 and set
 \vspace{-.20cm}
$$k_1:=l_{i,j}+\frac{1}{2}(\gamma_{i,j}+1),\ \ k_2:=k_1+1,\ldots,\ \ k_{p-1}:=k_1+(p-2).$$

Next, we shall show that $p$ is finite and can be estimated by a constant which is independent of the grid size.
By using the definition of the indices $l_{i,j}$ and $\gamma_{i,j}$ in (\ref{defi of index fpt}), we can estimate
\vspace{-.40cm}
$$(p-2)\Delta x_{\mbox{min}}\leq \Delta x_{k_2}+ \Delta x_{k_3}+ \ldots + \Delta x_{k_{p-1}} \leq \frac{1}{2}(\Delta x_{i}+\Delta x_{i+1})\leq \Delta x$$

which implies using the assumption of quasi uniformity (\ref{grid cond}) that
$$(p-2)\leq \frac{\Delta x}{\Delta x_{\mbox{min}}}\leq C \Rightarrow  p \leq C+2.$$

This means the above sum has uniformly bounded finite number of
terms. So we can apply the midpoint rule to the integral in second term on
the right hand side and use $N_k(t)=n(t,x_k)\Delta x_k + {\cal O}(\Delta x^3)$ to get
\begin{eqnarray} \label{second I_1}
\hspace{.6in}I_1=&&\frac{1}{2}\sum_{j=1}^{i-1}N_j(t)\int_{x_i-x_j}^{x_{l_{i,j}+ \frac{1}{2}\gamma_{i,j}}}\lambda_i^+(x'+ x_j)K(x',x_{j})n(t,x') dx'\\
&&+\frac{1}{2}\sum_{j=1}^{i-1}N_j(t)\sum_{x_i \leq x_j+x_k < x_{i+1}}\lambda_i^+(x_k+ x_j)K(x_k,x_{j})N_k(t)\nonumber\\
&&+\frac{1}{2}\sum_{j=1}^{i-1}N_j(t)\int_{x_{l_{i+1,j}+ \frac{1}{2}\gamma_{i+1,j}}}^{x_{i+1}-x_j}\lambda_i^+(x'+ x_j)K(x',x_{j})n(t,x') dx'\nonumber\\
&&-\frac{1}{12}\sum_{j=1}^{i-1}N_j(t)\sum_{k=l_{i,j}+ \frac{1}{2}(1+\gamma_{i,j})}^{l_{i+1,j}+ \frac{1}{2}(\gamma_{i+1,j}-1)}\frac{{\Delta x_k}^3}{\Delta x_{i}+\Delta x_{i+1}}\frac{\partial}{\partial x'}\{K(x_k,x_{j})n(t,x_k)\}\nonumber\\
&&\nonumber+{\cal O}(\Delta x^3).
\end{eqnarray}

Further we proceed as in $I_1$ to evaluate the fourth integral $I_4$ on the right hand side from equation (\ref{birth term}) as
\begin{eqnarray}\label{second I_4}
\hspace{.6in}I_4=&&\frac{1}{2}\sum_{j=1}^{i-2}N_j(t)\int_{x_{i-1}-x_j}^{x_{l_{i-1,j}+ \frac{1}{2}\gamma_{i-1,j}}}\lambda_i^-(x'+ x_j)K(x',x_{j})n(t,x') dx'\\
&&+\frac{1}{2}\sum_{j=1}^{i-2}N_j(t)\sum_{x_{i-1} \leq x_j+x_k < x_{i}}\lambda_i^-(x_k+ x_j)K(x_k,x_{j})N_k(t)\nonumber\\
&&+\frac{1}{2}\sum_{j=1}^{i-2}N_j(t)\int_{x_{l_{i,j}+ \frac{1}{2}\gamma_{i,j}}}^{x_{i}-x_j}\lambda_i^-(x'+ x_j)K(x',x_{j})n(t,x') dx'\nonumber\\
&&+\frac{1}{12}\sum_{j=1}^{i-2}N_j(t)\sum_{k=l_{i-1,j}+ \frac{1}{2}(1+\gamma_{i-1,j})}^{l_{i,j}+ \frac{1}{2}(\gamma_{i,j}-1)}\frac{{\Delta x_k}^3}{\Delta x_{i}+\Delta x_{i-1}}\frac{\partial}{\partial x'}\{K(x_k,x_{j})n(t,x_k)\}\nonumber\\
&&\nonumber+{\cal O}(\Delta x^3).
\end{eqnarray}

\paragraph{The integrals $I_2$ and $I_5$}

To compute the integral $I_2$ in equation (\ref{birth term}), we apply the right rectangle rule to the outer integral and
use $N_i(t)=n(t,x_i)\Delta x_i + {\cal O}(\Delta x^3)$ then split it into two integrals as follows
\begin{eqnarray*}
 I_2=&&\frac{1}{2}\int_{x_i}^{x_{i+1}}\lambda_i^+(x)K(x-x_i,x_i)n(t,x-x_i)N_i(t) dx\\
 &&-\frac{1}{4}\int_{x_i}^{x_{i+1}}\lambda_i^+(x)K(x-x_i,x_i)n(t,x-x_i)N_i(t) dx+ {\cal O}(\Delta x^3).
\end{eqnarray*}

 We use the left rectangle rule in the second integral on the right hand side and then put $K(0,x_i)=0$ which gives
us a third order term. Therefore, we obtain by substituting $x-x_i=x'$ in the first integral
\vspace{-.20cm}
\begin{eqnarray*}
I_2=\frac{1}{2}N_i(t)\int_{0}^{x_{i+1}-x_i}\lambda_i^+(x'+ x_i)K(x',x_{i})n(t,x') dx' +{\cal O}(\Delta x^3),
\end{eqnarray*}
By using the indices $l_{i,j}$ and $\gamma_{i,j}$ defined in equation (\ref{defi of index fpt}), again we proceed in the same way as in $I_1$ to obtain
\begin{eqnarray}\label{second I_2}
\hspace{.6in}I_2=&&\frac{1}{2}N_i(t)\sum_{x_i+x_k<x_{i+1}}\lambda_i^+(x_k+ x_i)K(x_k,x_{i})N_k(t)\\
&&\nonumber+\frac{1}{2}N_i(t)\int_{x_{l_{i+1,i}+\frac{1}{2}\gamma_{i+1,i}}}^{x_{i+1}-x_i}\lambda_i^+(x'+ x_i)K(x',x_{i})n(t,x') dx' +{\cal O}(\Delta x^3).
\end{eqnarray}
Before combining all these integrals together, we also need to
discretize the integral $I_5$ defined in equation (\ref{birth term}). Finally, analogous to the integral $I_2$, we can evaluate $I_5$ as
\vspace{-.20cm}
\begin{eqnarray}\label{second I_5}
\hspace{.6in} I_5=&&\frac{1}{2}N_{i-1}(t)\sum_{x_{i-1}+x_k<x_i}\lambda_i^-(x_k+ x_{i-1})K(x_k,x_{i-1})N_k(t)\\
&&+\frac{1}{2}N_{i-1}(t)\int_{x_{l_{i,i-1}+\frac{1}{2}\gamma_{i,i-1}}}^{x_{i}-x_{i-1}}\lambda_i^-(x'+ x_{i-1})K(x',x_{i-1})n(t,x') dx'\nonumber\\
&&\nonumber +{\cal O}(\Delta x^3).
\end{eqnarray}
 Collecting together (\ref{second I_6}), (\ref{second I_3}), (\ref{second I_1}), (\ref{second I_4}), (\ref{second I_2}), (\ref{second I_5}) and substituting
all these terms into equation (\ref{birth term}), we obtain
\vspace{-.20cm}
\begin{eqnarray}\label{birth term 2}
B_i=\hspace{3.4in}&\\
 \left.\begin{array}{lcr}
\sum_{x_i \leq x_j+x_k < x_{i+1}}^{j\geq k}\bigg(1-\frac{1}{2}\delta_{j,k}\bigg)\lambda_i^+(x_k+ x_j)K(x_k,x_{j})N_j(t)N_k(t)\phantom{\Bigg|}\\
+\sum_{x_{i-1} \leq x_j+x_k < x_{i}}^{j\geq
k}\bigg(1-\frac{1}{2}\delta_{j,k}\bigg)\lambda_i^-(x_k+
x_j)K(x_k,x_{j})N_j(t)N_k(t)\phantom{\Bigg|}
\end{array}\right\}& = \hat{B}_i
\nonumber\\
\left.\begin{array}{l}
+\frac{1}{2}\sum_{j=1}^{i-1}N_j(t)\int_{x_i-x_j}^{x_{l_{i,j}+ \frac{1}{2}\gamma_{i,j}}}\lambda_i^+(x'+ x_j)K(x',x_{j})n(t,x') dx'\phantom{\Bigg|}\\
+\frac{1}{2}\sum_{j=1}^{i}N_j(t)\int_{x_{l_{i+1,j}+ \frac{1}{2}\gamma_{i+1,j}}}^{x_{i+1}-x_j}\lambda_i^+(x'+ x_j)K(x',x_{j})n(t,x') dx'\phantom{\Bigg|}\\
+\frac{1}{2}\sum_{j=1}^{i-2}N_j(t)\int_{x_{i-1}-x_j}^{x_{l_{i-1,j}+ \frac{1}{2}\gamma_{i-1,j}}}\lambda_i^-(x'+ x_j)K(x',x_{j})n(t,x') dx'\phantom{\Bigg|}\\
+\frac{1}{2}\sum_{j=1}^{i-1}N_j(t)\int_{x_{l_{i,j}+
\frac{1}{2}\gamma_{i,j}}}^{x_{i}-x_j}\lambda_i^-(x'+
x_j)K(x',x_{j})n(t,x') dx'\phantom{\Bigg|}
\end{array}\right\}& =: E
\nonumber\\
\left.\begin{array}{rcl}
-\frac{1}{12}\sum_{j=1}^{i-1}N_j(t)\sum_{k=l_{i,j}+
\frac{1}{2}(1+\gamma_{i,j})}^{l_{i+1,j}+
\frac{1}{2}(\gamma_{i+1,j}-1)}
\frac{{\Delta x_k}^3}{\Delta x_{i}+\Delta x_{i+1}}\frac{\partial}{\partial x'}\{K(x_k,x_{j})n(t,x_k)\}\phantom{\Bigg|}\\
+\frac{1}{12}\sum_{j=1}^{i-2}N_j(t)\sum_{k=l_{i-1,j}+
\frac{1}{2}(1+\gamma_{i-1,j})}^{l_{i,j}+
\frac{1}{2}(\gamma_{i,j}-1)} \frac{{\Delta x_k}^3}{\Delta
x_{i}+\Delta x_{i-1}}\frac{\partial}{\partial
x'}\{K(x_k,x_{j})n(t,x_k)\}\phantom{\Bigg|}
\end{array}\right\} & =: E'
\nonumber\\
\nonumber+{\cal O}(\Delta x^3)+ \mathfrak{J}_i(f).\hspace{3.4in}
\end{eqnarray}

The first two terms on the right hand side are exactly the fixed pivot discretization $\hat{B}_i$ from
(\ref{resultant system}). Let us denote the sum from third to sixth terms and the difference
of remaining two terms on the right hand side by $E$ and $E'$,
respectively. Doing so and substituting $\mathfrak{J}_i(f)$ from Lemma \ref{consistency lem}, equation (\ref{birth term 2}) can be written as for $i=2, \ldots, I-1 $
\begin{eqnarray}\label{final birth term for i}
\hspace{.6in}B_i=&&\hat{B}_i+ E+ E'+\frac{f(x_i)}{2}\bigg[\Delta x_i-\bigg(\frac{\Delta x_{i-1}+\Delta x_{i+1}}{2}\bigg)\bigg]\\
&&\nonumber-\frac{f'(x_i)}{12}\bigg[(\Delta x_{i+1}-\Delta x_{i-1})\bigg\{\Delta x_i+\bigg(\frac{\Delta x_{i-1}+\Delta x_{i+1}}{2}\bigg)\bigg\}\bigg]+{\cal O}(\Delta x^3).
\end{eqnarray}

\subsubsection{Birth term on boundary cells}
Now we evaluate the order of the error in birth term on boundary
cells. We will treat the boundary cells separately. Therefore,
first we consider the birth term for $i=1$.
%\begin{eqnarray*}
%B_1=\frac{1}{2}\int_{x_{1-1/2}}^{x_{1+1/2}}\int_{0}^{x}K(x-y, y)n(t,x-y)n(t,y)dy dx.
%\end{eqnarray*}
Using the Lemma \ref{consistency lem} and changing the order of integration we get
\begin{eqnarray*}
B_1=&&\frac{1}{2}\int_{x_{1/2}}^{x_{1}}\int_{x_1}^{x_2}\lambda_1^+(x)K(x-y, y)n(t,x-y)n(t,y) dx dy\\
&&+\frac{1}{2}\int_{x_1}^{x_{1+1/2}}\int_{y}^{x_2}\lambda_1^+(x)K(x-y, y)n(t,x-y)n(t,y) dx dy\\
&&+\frac{1}{2}\int_{x_{1+1/2}}^{x_{2}}\int_{y}^{x_2}\lambda_1^+(x)K(x-y, y)n(t,x-y)n(t,y) dx dy + \mathfrak{J}_1(f).
\end{eqnarray*}
We apply the right rectangle rule to the outer and inner integrals
respectively of the first term as well as to the outer integral of
the third term. Both of these terms are of second order
separately. Also we use the left rectangle rule to the outer
integral of the second term to get
\begin{eqnarray*}
B_1=\frac{1}{4}\Delta x_1\int_{x_1}^{x_2}\lambda_1^+(x)K(x-x_1, x_1)n(t,x-x_1)n(t,x_1) dx+ {\cal O}(\Delta x^2)  + \mathfrak{J}_1(f).
\end{eqnarray*}

Again by applying the right rectangle rule and putting $N_1(t)=n(t,x_1)\Delta x_1 + {\cal O}(\Delta x^3)$, we obtain
\begin{eqnarray*}
B_1= \frac{1}{2}N_1(t)\sum_{x_1+x_k<x_{2}}\lambda_i^+(x_k+ x_1)K(x_k,x_{1})N_k(t) + {\cal O}(\Delta x^2)  + \mathfrak{J}_1(f).
\end{eqnarray*}

In terms of fixed pivot discretization we have
\begin{eqnarray}\label{birth term for 11}
B_1= \hat{B}_1 + \frac{f(x_1)}{4}[3\Delta x_1-\Delta x_{2}]+
{\cal O}(\Delta x^2),
\end{eqnarray}

where $f$ is defined in Eq. (\ref{def of f}).
%\begin{eqnarray*}
%f(t,x_1)=\frac{1}{2}\int_{0}^{x_1}K(x_1-y, y)n(t,x_1-y)n(t,y)dy.
%\end{eqnarray*}
Then the application of the right rectangle rule to the integral in (\ref{def of f}) gives $f(x_1)={\cal O}(\Delta x)$. Substituting this value of $f(x_1)$ in (\ref{birth term for 11}), we obtain
%\vspace{-.20cm}
\begin{eqnarray}\label{birth term for 1}
B_1= \hat{B}_1 + {\cal O}(\Delta x^2).
\end{eqnarray}

Finally, in case of last boundary cell i.e.\ $i=I$, we use the Lemma \ref{consistency lem} and change the order of integration as in $B_1$. Further we split each integral into two parts to estimate
\begin{eqnarray*}
B_I=&&\frac{1}{2}\sum_{k=1}^{I-1}\int_{x_{k-1/2}}^{x_{k+1/2}}\int_{x_{I}}^{x_{I+1}}\lambda_I^+(x)K(x-y, y)n(t,x-y)n(t,y) dx dy\\
&&+\frac{1}{2}\int_{x_{I-1/2}}^{x_{I}}\int_{x_{I}}^{x_{I+1}}\lambda_I^+(x)K(x-y, y)n(t,x-y)n(t,y) dx dy\\
&&+\frac{1}{2}\int_{x_{I}}^{x_{I+1/2}}\int_{y}^{x_{I+1}}\lambda_I^+(x)K(x-y, y)n(t,x-y)n(t,y) dx dy\\
&&+\frac{1}{2}\int_{x_{I+1/2}}^{x_{I+1}}\int_{y}^{x_{I+1}}\lambda_I^+(x)K(x-y, y)n(t,x-y)n(t,y) dx dy\\
&&+\frac{1}{2}\sum_{k=1}^{I-2}\int_{x_{k-1/2}}^{x_{k+1/2}}\int_{x_{I-1}}^{x_I}\lambda_I^-(x)K(x-y, y)n(t,x-y)n(t,y) dx dy\\
&&+\frac{1}{2}\int_{x_{I-3/2}}^{x_{I-1}}\int_{x_{I-1}}^{x_I}\lambda_I^-(x)K(x-y, y)n(t,x-y)n(t,y) dx dy\\
&&+\frac{1}{2}\int_{x_{I-1}}^{x_{I-1/2}}\int_{y}^{x_I}\lambda_I^-(x)K(x-y, y)n(t,x-y)n(t,y) dx dy\\
&&+\frac{1}{2}\int_{x_{I-1/2}}^{x_{I}}\int_{y}^{x_I}\lambda_I^-(x)K(x-y, y)n(t,x-y)n(t,y) dx dy + \mathfrak{J}_I(f).
\end{eqnarray*}

By applying the midpoint, right, left and right rectangle rules to the outer integral of the first and fifth, second and sixth, third and seventh as well as fourth and eighth terms respectively. Then after substituting $N_k(t)=n(t,x_k)\Delta x_k + {\cal O}(\Delta x^3)$, we obtain
\begin{eqnarray*}
B_I=&&\frac{1}{2}\sum_{k=1}^{I-1}N_k(t)\int_{x_{I}}^{x_{I+1}}\lambda_I^+(x)K(x-x_k, x_k)n(t,x-x_k) dx  \\
&& + \frac{1}{2} N_{I}(t)\int_{x_{I}}^{x_{I+1}}\lambda_I^+(x)K(x-x_{I}, x_{I})n(t,x-x_{I}) dx\\
&&+\frac{1}{2}\sum_{k=1}^{I-2}N_k(t)\int_{x_{I-1}}^{x_I}\lambda_I^-(x)K(x-x_k, x_k)n(t,x-x_k) dx  \\
&& + \frac{1}{2} N_{I-1}(t)\int_{x_{I-1}}^{x_I}\lambda_I^-(x)K(x-x_{I-1}, x_{I-1})n(t,x-x_{I-1}) dx + {\cal O}(\Delta x^2)  + \mathfrak{J}_I(f).
\end{eqnarray*}

The first, second, third and fourth term on the right hand side can be solved similar to $I_1$, $I_2$, $I_4$ and $I_5$ respectively. Thus, by substituting the value of $\mathfrak{J}_I(f)$ from Lemma \ref{consistency lem}, we have
%\begin{eqnarray*}
%B_I=&&\sum_{x_{I} \leq x_j+x_k < x_{I+1}}^{j\geq k}\bigg(1-\frac{1}{2}\delta_{j,k}\bigg)\lambda_I^+(x_k+ x_j)K(x_k,x_{j})N_j(t) N_k(t)\\
%&&+\sum_{x_{I-1} \leq x_j+x_k < x_{I}}^{j\geq k}\bigg(1-\frac{1}{2}\delta_{j,k}\bigg)\lambda_I^-(x_k+ x_j)K(x_k,x_{j})N_j(t) N_k(t)+ {\cal O}(\Delta x^2) \\
%&& + \mathfrak{J}_I(f).
%\end{eqnarray*}
%
%In terms of fixed pivot discretization, we have
\begin{eqnarray}\label{birth term for I}
B_I= \hat{B}_I + \frac{f(x_I)}{4}[\Delta x_I-\Delta x_{I-1}]+ {\cal O}(\Delta x^2).
\end{eqnarray}

\subsection{Discretization error in the death term}\label{Order of the death term}
From equation (\ref{E:D_i}), the integrated death term can be written as follows
\begin{eqnarray*}
D_i = \int_{x_{i-1/2}}^{x_{i+1/2}}\sum_{j=1}^{I}\int_{x_{j-1/2}}^{x_{j+1/2}}K(x, y)n(t,y)n(t,x)dy dx.
\end{eqnarray*}
The application of the midpoint rule to the outer and inner integrals gives us
%\begin{eqnarray*}
%D_i = \Delta x_i \int_{0}^{x_{I+1/2}}K(x_i, y)n(t,y)n(t,x_i)dy + {\cal O}(\Delta x^3).
%\end{eqnarray*}
%This can be rewritten as
%\begin{eqnarray*}
%D_i = N_i(t)\sum_{j=1}^{I}\int_{x_{j-1/2}}^{x_{j+1/2}}K(x, y)n(t,y)dy + {\cal O}(\Delta x^3). .
%\end{eqnarray*}
%
%Again we apply the midpoint rule in the above integral and obtain
%\begin{eqnarray*}
%D_i = N_i(t)\sum_{j=1}^{I}K(x_i, x_j)n(t,x_j)\Delta x_j + {\cal O}(\Delta x^3),
%\end{eqnarray*}
%i.e.\
%\begin{eqnarray*}
%D_i = N_i(t)\sum_{j=1}^{I}K(x_i, x_j)N_j(t) + {\cal O}(\Delta x^3),
%\end{eqnarray*}
%Thus, from (\ref{resultant system}) we have
\begin{equation}\label{death term for i}
D_i = N_i(t)\sum_{j=1}^{I}K(x_i, x_j)N_j(t) + {\cal O}(\Delta x^3)=\hat{D}_i + {\cal O}(\Delta x^3).
\end{equation}
\subsection{Summary of all terms}\label{summary of all terms}
Finally from the equations (\ref{final birth term for i}), (\ref{birth term for 1}), (\ref{birth term for I}) and
(\ref{death term for i}), we can summarize the spatial truncation error $\sigma _i(t)=B_i-D_i-(\hat{B}_i-\hat{D}_i)$
as follows
\begin{equation}\label{E:ConsistencyOrder_1}
\sigma_1(t)={\cal O}(\Delta x^2),
\end{equation}
\vspace{-.60cm}
\begin{eqnarray}\label{E:ConsistencyOrder_i}
\hspace{.6in}\sigma_i(t)= && E+ E'+ \frac{f(x_i)}{2}\bigg[\Delta x_i-\bigg(\frac{\Delta x_{i-1}+\Delta x_{i+1}}{2}\bigg)\bigg]\\
&&-\frac{f'(x_i)}{12}\bigg[(\Delta x_{i+1}-\Delta x_{i-1})\bigg\{\Delta x_i+\bigg(\frac{\Delta x_{i-1}+\Delta x_{i+1}}{2}\bigg)\bigg\}\bigg]\nonumber\\
&&\nonumber+{\cal O}(\Delta x^3),\ \ i=2, \ldots,I-1,
\end{eqnarray}
\vspace{-.60cm}
\begin{equation}\label{E:ConsistencyOrder_I}
\sigma_I(t)= \frac{f(x_I)}{4}[\Delta x_I-\Delta x_{I-1}]+ {\cal
O}(\Delta x^2),
\end{equation}
where $E$ and $E'$ are defined in (\ref{birth term 2}).

\subsection{Meshes}\label{Meshes}
Now let us consider the following five different types of meshes to
calculate the order of local discretization error. Most of the grids are the same which are used for solving breakage PBE in J. Kumar and Warnecke \cite{JKUMARNM:2008}.
\subsubsection{Uniform mesh} Let us begin with the case of uniform mesh i.e.\ $\Delta x_i = \Delta x \ \ \mbox{and}\ \ x_i=(i-1/2)\Delta x\ \ \mbox{for any}\ \ i=1, \ldots, I$.
To estimate $\sigma_i(t)$, first we have to evaluate the order of $E$ and $E'$. In case of such uniform grids, we have
$$x_i-x_j= x_{i-j+1/2},\hspace{.1in} x_{i+1}-x_j= x_{i-j+3/2},\hspace{.1in} \mbox{and} \hspace{.1in}x_{i-1}-x_j= x_{i-j-1/2}.$$
By using the definition of indices in (\ref{defi of index fpt}), we have
$$x_i-x_j= x_{i-j+1/2} \in \Lambda _{l_{i,j}}.$$
This implies that
\vspace{-.70cm}
$$x_i-x_j= x_{i-j+1/2}=x_{l_{i,j}+ \frac{1}{2}\gamma_{i,j}}.$$
Similarly, we can easily obtain
$$x_{i+1}-x_j= x_{i-j+3/2}=x_{l_{i+1,j}+ \frac{1}{2}\gamma_{i+1,j}},$$
and
\vspace{-.70cm}
$$x_{i-1}-x_j=x_{i-j-1/2}=x_{l_{i-1,j}+ \frac{1}{2}\gamma_{i-1,j}}.$$
Therefore, from (\ref{birth term 2}) we have $E=0$. For uniform
grids, we can observe from above that $x_{i-1}-x_j$, $x_i-x_j$,
and $x_{i+1}-x_j$ are the right end boundaries of adjacent cells,
i.e.\ $l_{i-1,j}=(i-j-1)$th, $l_{i,j}=(i-j)$th and
$l_{i+1,j}=(i-j+1)$th cells, respectively. Here
$\gamma_{i-1,j}=\gamma_{i,j}=\gamma_{i+1,j}=1$. Thus, by
substituting the values of all these indices in $E'$ defined in
(\ref{birth term 2}) and using the Taylor series expansion we
obtain $E'={\cal O}(\Delta x^3)$. Finally, from the equations
(\ref{E:ConsistencyOrder_1})-(\ref{E:ConsistencyOrder_I}) we
estimate
%\begin{eqnarray*}
%\sigma_i(t)=\begin{cases}
%{\cal O}(\Delta x^2)\ \, & \ i=1, I, \\
%{\cal O}(\Delta x^3)\ \, & \ i=2, \ldots, I-1.
%\end{cases}
%\end{eqnarray*}
\begin{eqnarray*}
\sigma_i(t)=
\left\{
    \begin{array}{ll}
        {\cal O}(\Delta x^2)\ \, & \ i=1, I, \\
{\cal O}(\Delta x^3)\ \, & \ i=2, \ldots, I-1.
    \end{array}
\right.
\end{eqnarray*}

The order of consistency is given by
\begin{eqnarray*}
 \|\mathbf{\sigma}(t)\|=|\sigma_1(t)|+\sum_{i=2}^{I-1}|\sigma_i(t)|+|\sigma_I(t)|
={\cal O}(\Delta x^2).
\end{eqnarray*}
Thus the method is second order consistent on a uniform mesh.

%\begin{figure}
%  \centering
%  \input{CA/UniformMesh}
%  \caption{Non-uniform smooth mesh.}\label{p:SmoothMesh fpt}
%\end{figure}
\subsubsection{Non-uniform smooth mesh $(x_{i+1/2}=r x_{i-1/2}$, $r>1$, $i=1, \ldots, I$)}\label{smooth}

In general, non-uniform smooth grids can be obtained by applying
some smooth transformation to uniform grids. Let us consider a
variable $\xi$ with uniform grids and a smooth transformation $x =
g(\xi)$ such that $x_{i\pm 1/2} = g(\xi_{i\pm 1/2})$ for any $i=1,
\ldots, I$ to get non-uniform smooth mesh.  The scheme again gives
us second order accuracy. Let $h$ be the uniform mesh width in the variable $\xi$. In
case of smooth grids, Taylor series expansions in smooth
transformations give
\vspace{-.40cm}
\begin{eqnarray*}
 \Delta x_{i}=x_{i+1/2}-x_{i-1/2}=g(\xi_i + h/2)-g(\xi_i - h/2)=h g'(\xi_i)+\frac{h^3}{24}g''(\xi_i)+{\cal O}(h^4).
\end{eqnarray*}
Hence, by evaluating $\Delta x_{i-1}$ and $\Delta x_{i+1}$ in a similar way, we obtain for any $i=2, \ldots, I$
 \begin{eqnarray*}
  \Delta x_{i}-\Delta x_{i-1}= {\cal O}(h^2), \hspace{.1in} 2\Delta x_{i}-(\Delta x_{i-1} +\Delta x_{i+1})\ \ \mbox{and}\ \ {\Delta x_{i}}^2-\Delta x_{i-1}\Delta
x_{i+1}={\cal O}(h^3).
 \end{eqnarray*}

 The above identities help us to simplify the equations
(\ref{E:ConsistencyOrder_1})-(\ref{E:ConsistencyOrder_I}) and give
\begin{equation}\label{lde smooth}
\sigma_1(t) = {\cal O}(h^2),\ \ \sigma_I(t) = {\cal O}(h^2) \quad
\mbox{and} \quad \sigma_i(t) = E + E' + {\cal O}(h^3).
\end{equation}
To find the order of $\sigma _i$, we need to evaluate $E$ and $E'$
separately. Let us first consider $E$ from (\ref{birth term 2})
and set $f(t, x, y):= K(x,y)n(t,x)$. Then the applications of the
left rectangle rule to the integrals in first and third terms as
well as the right rectangle rule to the integrals in second and
fourth terms estimate
\begin{eqnarray}\label{comb E1,E2}
 \hspace{.6in}E=&&\frac{1}{4}\sum_{j=1}^{i-1}N_j(t) \frac{(x_{l_{i,j}+ \frac{1}{2}\gamma_{i,j}}-x_i+x_j)^2}{x_i-x_{i+1}}f(t,x_i-x_j,x_j)\\
&&-\frac{1}{4}\sum_{j=1}^{i}N_j(t)\frac{(x_{l_{i+1,j}+ \frac{1}{2}\gamma_{i+1,j}}-x_{i+1}+x_j)^2}{x_i-x_{i+1}}f(t,x_{i+1}-x_j,x_j)\nonumber\\
&&+\frac{1}{4}\sum_{j=1}^{i-2}N_j(t)\frac{(x_{l_{i-1,j}+ \frac{1}{2}\gamma_{i-1,j}}-x_{i-1}+x_j)^2}{x_i-x_{i-1}}f(t,x_{i-1}-x_j,x_j)\nonumber\\
&&-\frac{1}{4}\sum_{j=1}^{i-1}N_j(t) \frac{(x_{l_{i,j}+
\frac{1}{2}\gamma_{i,j}}-x_i+x_j)^2}{x_i-x_{i-1}}f(t,x_i-x_j,x_j)\nonumber\\
&&+\frac{1}{6}\sum_{j=1}^{i-1}N_j(t) \frac{(x_{l_{i,j}+ \frac{1}{2}\gamma_{i,j}}-x_i+x_j)^3}{x_i-x_{i+1}}f_{x'}(t,x_i-x_j,x_j)\nonumber\\
&&-\frac{1}{6}\sum_{j=1}^{i}N_j(t)\frac{(x_{l_{i+1,j}+ \frac{1}{2}\gamma_{i+1,j}}-x_{i+1}+x_j)^3}{x_i-x_{i+1}}f_{x'}(t,x_{i+1}-x_j,x_j)\nonumber\\
&&+\frac{1}{6}\sum_{j=1}^{i-2}N_j(t)\frac{(x_{l_{i-1,j}+ \frac{1}{2}\gamma_{i-1,j}}-x_{i-1}+x_j)^3}{x_i-x_{i-1}}f_{x'}(t,x_{i-1}-x_j,x_j)\nonumber\\
&&\nonumber-\frac{1}{6}\sum_{j=1}^{i-1}N_j(t) \frac{(x_{l_{i,j}+
\frac{1}{2}\gamma_{i,j}}-x_i+x_j)^3}{x_i-x_{i-1}}f_{x'}(t,x_i-x_j,x_j)+{\cal
O}(\Delta x^3).
\end{eqnarray}
Let us denote the combination of first four and last four sums on
the right-hand side in (\ref{comb E1,E2}) by $E_1$ and $E_2$
respectively. We shall solve them separately. First we take $E_1$
and approximate $f$ at $(t,x_i-x_j,x_j)$ by $f$ expanded around
$(t,x_{i+1}-x_j,x_j)$ in first and fourth summations to get
\begin{eqnarray*}
 E_1=\frac{1}{4}\sum_{j=1}^{i-1}N_j&&(t) \frac{(x_{l_{i,j}+ \frac{1}{2}\gamma_{i,j}}-x_i+x_j)^2}{x_i-x_{i+1}}\\
 &&\times\{f(t,x_{i+1}-x_j,x_j)+ (x_i-x_{i+1})f_{x'}(t,x_{i+1}-x_j,x_j)\}\\
-\frac{1}{4}\sum_{j=1}^{i}&&N_j(t)\frac{(x_{l_{i+1,j}+ \frac{1}{2}\gamma_{i+1,j}}-x_{i+1}+x_j)^2}{x_i-x_{i+1}}f(t,x_{i+1}-x_j,x_j)\\
+\frac{1}{4}\sum_{j=1}^{i-2}&&N_j(t)\frac{(x_{l_{i-1,j}+ \frac{1}{2}\gamma_{i-1,j}}-x_{i-1}+x_j)^2}{x_i-x_{i-1}}f(t,x_{i-1}-x_j,x_j)\\
-\frac{1}{4}\sum_{j=1}^{i-1}&&N_j(t)
\frac{(x_{l_{i,j}+\frac{1}{2}\gamma_{i,j}}-x_i+x_j)^2}{x_i-x_{i-1}}\\
&&\times\{f(t,x_{i-1}-x_j,x_j)+
(x_i-x_{i-1})f_{x'}(t,x_{i-1}-x_j,x_j)\}+ {\cal O}(\Delta x^3).
\end{eqnarray*}

Again approximating $f_{x'}$ at point $(t,x_{i+1}-x_{j}, x_j)$ by
$f_{x'}$ evaluated at point $(t,x_{i-1}-x_j,x_j)$ in the second
part of the first summation. Then we replace $j$ by $j+1$ and
$j-1$ respectively in second and third term summations. Further we
can use the relationship $N_j(t)=n(t,x_j)\Delta x_j+ {\cal
O}(\Delta x^3)$ for the midpoint rule to obtain
\begin{eqnarray*}
 E_1=&&\frac{1}{4}n(t,x_{1})\Delta x_{1} \frac{(x_{l_{i+1,1}+ \frac{1}{2}\gamma_{i+1,1}}-x_{i+1}+x_{1})^2}{x_{i+1}-x_{i}}f(t,x_{i+1}-x_{1},x_{1})\\
 &&+\frac{1}{4}\sum_{j=1}^{i-1}n(t,x_{j+1})\Delta x_{j+1} \frac{(x_{l_{i+1,j+1}+ \frac{1}{2}\gamma_{i+1,j+1}}-x_{i+1}+x_{j+1})^2}{x_{i+1}-x_{i}}f(t,x_{i+1}-x_{j+1},x_{j+1})\\
&&-\frac{1}{4}\sum_{j=1}^{i-1}n(t,x_j)\Delta x_j\frac{(x_{l_{i,j}+ \frac{1}{2}\gamma_{i,j}}-x_i+x_j)^2}{x_{i+1}-x_{i}}f(t,x_{i+1}-x_j,x_j)\\
&&+\frac{1}{4}\sum_{j=2}^{i-1}n(t,x_{j-1})\Delta x_{j-1}\frac{(x_{l_{i-1,j-1}+ \frac{1}{2}\gamma_{i-1,j-1}}-x_{i-1}+x_{j-1})^2}{x_i-x_{i-1}}f(t,x_{i-1}-x_{j-1},x_{j-1})\\
&&-\frac{1}{4}\sum_{j=2}^{i-1}n(t,x_j)\Delta x_j \frac{(x_{l_{i,j}+
\frac{1}{2}\gamma_{i,j}}-x_i+x_j)^2}{x_i-x_{i-1}}f(t,x_{i-1}-x_j,x_j)\\
&&-\frac{1}{4}n(t,x_1)\Delta x_1 \frac{(x_{l_{i,1}+
\frac{1}{2}\gamma_{i,1}}-x_i+x_1)^2}{x_i-x_{i-1}}f(t,x_{i-1}-x_1,x_1)+{\cal O}(\Delta x^3).
\end{eqnarray*}
Approximating the functions $x'\mapsto f(t,x_{i\pm
1}-x',x')n(t,x')$ at point $x_j$ by $f(t,x_{i\pm 1}-x',x')n(t,x')$
evaluated at point $x'=x_{j\pm 1}$ of the third and fifth terms on
the right hand side respectively. Then we approximate $f$ at
$(t,x_{i-1}-x_1,x_1)$ by $f$ expanded around $(t,x_{i+1}-x_1,x_1)$
in the last term on the right-hand side and rearrange the whole expression for
$E_1$ as follows
\begin{eqnarray*}
E_1=\frac{1}{4}n(t,x_{1})\Delta x_{1} &&\frac{(x_{l_{i+1,1}+ \frac{1}{2}\gamma_{i+1,1}}-x_{i+1}+x_{1})^2}{x_{i+1}-x_{i}}f(t,x_{i+1}-x_{1},x_{1})\\
+\frac{1}{4}\sum_{j=1}^{i-1}&&n(t,x_{j+1})(\Delta x_{j+1}-\Delta x_{j})\\
&&\times \frac{(x_{l_{i+1,j+1}+ \frac{1}{2}\gamma_{i+1,j+1}}-x_{i+1}+x_{j+1})^2}{x_{i+1}-x_{i}}f(t,x_{i+1}-x_{j+1},x_{j+1})\\
+\frac{1}{4}\sum_{j=1}^{i-1}&&n(t,x_{j+1})\Delta
x_{j}\frac{f(t,x_{i+1}-x_{j+1},x_{j+1})}{x_{i+1}-x_{i}}\nonumber\\
&&\times \bigg\{(x_{l_{i+1,j+1}+
\frac{1}{2}\gamma_{i+1,j+1}}-x_{i+1}+x_{j+1})^2-(x_{l_{i,j}+
\frac{1}{2}\gamma_{i,j}}-x_i+x_j)^2\bigg\}\nonumber\\
+\frac{1}{4}\sum_{j=1}^{i-1}&&\Delta x_j \frac{\partial}{\partial
x'}\{n(t,x_{j+1})f(t,x_{i+1}-x_{j+1},x_{j+1})\}\\
&&\times\frac{(x_{j+1}-x_{j})}{x_{i+1}-x_{i}}(x_{l_{i,j}+
\frac{1}{2}\gamma_{i,j}}-x_i+x_j)^2\nonumber\\
-\frac{1}{4}\sum_{j=2}^{i-1}&&n(t,x_{j-1})(\Delta x_{j}-\Delta x_{j-1})\\
&&\times \frac{(x_{l_{i-1,j-1}+ \frac{1}{2}\gamma_{i-1,j-1}}-x_{i-1}+x_{j-1})^2}{x_{i}-x_{i-1}}f(t,x_{i-1}-x_{j-1},x_{j-1})\\
-\frac{1}{4}\sum_{j=2}^{i-1}&&n(t,x_{j-1})\Delta
x_{j}\frac{f(t,x_{i-1}-x_{j-1},x_{j-1})}{x_{i}-x_{i-1}}\nonumber\\
&&\times \bigg\{(x_{l_{i,j}+
\frac{1}{2}\gamma_{i,j}}-x_{i}+x_{j})^2-(x_{l_{i-1,j-1}+
\frac{1}{2}\gamma_{i-1,j-1}}-x_{i-1}+x_{j-1})^2\bigg\}\nonumber\\
-\frac{1}{4}\sum_{j=2}^{i-1}&&\Delta x_j \frac{\partial}{\partial
x'}\{n(t,x_{j-1})f(t,x_{i-1}-x_{j-1},x_{j-1})\}\\
&&\times\frac{(x_{j}-x_{j-1})}{x_{i}-x_{i-1}}(x_{l_{i,j}+
\frac{1}{2}\gamma_{i,j}}-x_i+x_j)^2 \\
-\frac{1}{4}n(t,x_1)&&\Delta x_1 \frac{(x_{l_{i,1}+
\frac{1}{2}\gamma_{i,1}}-x_i+x_1)^2}{x_i-x_{i-1}}f(t,x_{i+1}-x_1,x_1)
+{\cal O}(\Delta x^3).
\end{eqnarray*}
Let us denote the each term on the right hand side by $E_{11}, \ldots,
E_{18}$ respectively. Therefore, we have
\begin{equation}\label{Error E}
\hspace{.6in} E_1=(E_{11}-E_{18})+
(E_{12}-E_{15})+(E_{13}-E_{16})+(E_{14}-E_{17}) + {\cal O}(\Delta
x^3).
\end{equation}
In order to solve the error $E_1$, we need to evaluate differences
of the above mentioned terms separately. First of all, we are
interested in $E_{12}-E_{15}$. In case of smooth grids, we have as
before $\Delta x_j-\Delta x_{j-1}= {\cal O}(\Delta x^2)$ which
clearly shows that $E_{15}={\cal O}(\Delta x^2)$. Then Taylor's
series expansion gives us a second order approximation of $E_{15}$
as
\begin{eqnarray*}
 E_{15}=\frac{1}{4}\sum_{j=2}^{i-1}&&n(x_{j+1})f(x_{i+1}-x_{j+1},x_{j+1})(\Delta x_{j}-\Delta
 x_{j-1})\\
&&\times\frac{(x_{l_{i-1,j-1}+\frac{1}{2}\gamma_{i-1,j-1}}-x_{i-1}+x_{j-1})^2}{x_{i}-x_{i-1}}+{\cal
O}(\Delta x^3).
\end{eqnarray*}
Without loss of generality the summation in $E_{12}$ can be
started from $j=2$ since the term we are dropping from the summation
is third order accurate. Then we can write
\begin{eqnarray}\label{E125}
 \hspace{.6in}E_{12}-E_{15}=\frac{1}{2}&&\sum_{j=2}^{i-1}n(x_{j+1})f(x_{i+1}-x_{j+1},x_{j+1})\\
 \times &&\bigg\{\frac{(\Delta x_{j+1}-\Delta x_{j})}{\Delta x_{i}+\Delta x_{i+1}}(x_{l_{i+1,j+1}
+ \frac{1}{2}\gamma_{i+1,j+1}}-x_{i+1}+x_{j+1})^2\nonumber\\
&&- \frac{(\Delta x_{j}-\Delta x_{j-1})}{\Delta x_{i}+\Delta x_{i-1}}(x_{l_{i-1,j-1}+\frac{1}{2}\gamma_{i-1,j-1}}-x_{i-1}+x_{j-1})^2\bigg\}\nonumber\\
&&\nonumber+{\cal O}(\Delta x^3).
\end{eqnarray}
%To prove $E_{12}-E_{15}={\cal O}(\Delta x^3)$, we have to show that
%\begin{eqnarray}\label{order6}
% &&(\Delta x_{i}+\Delta x_{i-1})(\Delta x_{j+1}-\Delta x_{j})(x_{l_{i+1,j+1}+ \frac{1}{2}\gamma_{i+1,j+1}}-x_{i+1}+x_{j+1})^2\nonumber\\
%&&- (\Delta x_{i}+\Delta x_{i+1})(\Delta x_{j}-\Delta x_{j-1})(x_{l_{i-1,j-1}+\frac{1}{2}\gamma_{i-1,j-1}}-x_{i-1}+x_{j-1})^2={\cal O}(\Delta x^6).
%\end{eqnarray}
%Since $h$ is the uniform mesh width in the variable $\xi$, we have
%\begin{eqnarray*}
% \Delta x_{i}=x_{i+1/2}-x_{i-1/2}=g(\xi_i + h/2)-g(\xi_i - h/2).
%\end{eqnarray*}
%By applying the Taylor series expansion we get
%\begin{eqnarray*}
% \Delta x_{i}= h g'(\xi_i)+\frac{h^3}{24}g''(\xi_i)+{\cal O}(h^4).
%\end{eqnarray*}
%Similarly, we have
%\begin{eqnarray*}
% \Delta x_{i+1}= h g'(\xi_i)+h^2 g''(\xi_i)+{\cal O}(h^3),
%\end{eqnarray*}
%and
%\begin{eqnarray*}
% \Delta x_{i-1}= h g'(\xi_i)-h^2 g''(\xi_i)+{\cal O}(h^3).
%\end{eqnarray*}
%Further, we have
%\begin{equation}\label{i,i-1}
% \Delta x_{i}+\Delta x_{i-1}=2h g'(\xi_i)-h^2 g''(\xi_i)+{\cal O}(h^3),
%\end{equation}
%and
%\begin{equation}\label{i,i+1}
% \Delta x_{i}+\Delta x_{i+1}= 2h g'(\xi_i)+h^2 g''(\xi_i)+{\cal O}(h^3).
%\end{equation}
%Similarly, we can write
%\begin{eqnarray*}
% \Delta x_{j}-\Delta x_{j-1}=h^2 g''(\xi_j)+{\cal O}(h^3),
%\end{eqnarray*}
%and
%\begin{eqnarray*}
% \Delta x_{j+1}-\Delta x_{j}=h^2 g''(\xi_j)+{\cal O}(h^3).
%\end{eqnarray*}
By applying Taylor series expansion in the smooth transformation,
we obtain
\begin{equation}\label{i-1,j+1}
 (\Delta x_{i}+\Delta x_{i-1})(\Delta x_{j+1}-\Delta x_{j})=2h^3 g'(\xi_i)g''(\xi_j)+{\cal O}(h^4),
\end{equation}
and
\vspace{-.25cm}
\begin{equation}\label{i+1,j-1}
 (\Delta x_{i}+\Delta x_{i+1})(\Delta x_{j}-\Delta x_{j-1})=2h^3 g'(\xi_i)g''(\xi_j)+{\cal O}(h^4).
\end{equation}
Here we consider a particular type of non-uniform smooth grids
i.e.\ $x_{i+1/2} = r x_{i-1/2}, r>1, i = 1, \ldots I$. Such
grids are called \textbf{geometric grids}. These grids can be
obtained by applying a smooth transformation as
\textbf{exponential function} on uniform grids. Here we have
$x_{i+1/2} = \exp(\xi_{i+1/2}) = \exp(h +\xi_{i-1/2}) = \exp(h)
\exp(\xi_{i-1/2}) =
\exp(h) x_{i-1/2} =: r x_{i-1/2}, \, r>1$.\\
Let us consider $\xi_{11}$, $\xi_{12}$, $\xi_{21}$, $\xi_{22}$, $\xi_{31}$ and $\xi_{32}$ are corresponding
points on uniform mesh for $x_{l_{i+1,j+1}+ \frac{1}{2}\gamma_{i+1,j+1}}$, $x_{i+1}-x_{j+1}$,
$x_{l_{i,j}+ \frac{1}{2}\gamma_{i,j}}$, $x_{i}-x_{j}$, $x_{l_{i-1,j-1}+ \frac{1}{2}\gamma_{i-1,j-1}}$ and $x_{i-1}-x_{j-1}$, respectively. These are defined as follows.
$$\xi_{11}=\mbox{ln}(x_{l_{i+1,j+1}+\frac{1}{2}\gamma_{i+1,j+1}}), \ldots, \xi_{32}=\mbox{ln}(x_{i-1}-x_{j-1}).$$
By the definition of the indices in (\ref{defi of index fpt}), we know
$$x_{i+1}-x_{j+1} \in \Lambda_{l_{i+1,j+1}}, \ \ x_{i}-x_{j} \in \Lambda_{l_{i,j}}\ \ \mbox{and} \ \ x_{i-1}-x_{j-1} \in \Lambda_{l_{i-1,j-1}}.$$
For geometric grids, we have
$$x_{i+1}-x_{j+1}=r(x_{i}-x_{j})=r^2(x_{i-1}-x_{j-1}).$$
Therefore, we have
\vspace{-.40cm}
$$l_{i+1,j+1}=l_{i,j}+1=l_{i-1,j-1}+2.$$
Further, in case of geometric grids, we have
$$\gamma_{i+1,j+1}=\gamma_{i,j}=\gamma_{i-1,j-1}.$$
Let us consider
\begin{eqnarray*}
 h_1=\xi_{11}-\xi_{12}=&&\mbox{ln}(x_{l_{i+1,j+1}+\frac{1}{2}\gamma_{i+1,j+1}})-\mbox{ln}(x_{i+1}-x_{j+1})\\
&&=\mbox{ln}\bigg(\frac{x_{l_{i+1,j+1}+\frac{1}{2}\gamma_{i+1,j+1}}}{x_{i+1}-x_{j+1}}\bigg)=\mbox{ln}\bigg(\frac{x_{l_{i,j}+\frac{1}{2}\gamma_{i,j}}}{x_{i}-x_{j}}\bigg)=\xi_{21}-\xi_{22}\\
&&=\mbox{ln}\bigg(\frac{x_{l_{i-1,j-1}+\frac{1}{2}\gamma_{i-1,j-1}}}{x_{i-1}-x_{j-1}}\bigg)=\xi_{31}-\xi_{32}.
\end{eqnarray*}

Similarly, we can estimate
\begin{eqnarray*}
 \xi_{12}-\xi_{32}= \mbox{ln}\bigg(\frac{x_{i+1}-x_{j+1}}{x_{i-1}-x_{j-1}}\bigg)=\mbox{ln}(r^2)=2h.
\end{eqnarray*}
By the application of smooth transformation we can write
\begin{eqnarray}\label{smooth12}
\hspace{.6in} [x_{l_{i+1,j+1}+\frac{1}{2}\gamma_{i+1,j+1}}&&-(x_{i+1}-x_{j+1})]^2\\
 &&\nonumber=[g(\xi_{11})-g(\xi_{12})]^2=h_1^2\{g'(\xi_{12})\}^2 + {\cal O}(h^3),
\end{eqnarray}
and
\begin{eqnarray}\label{smooth32}
\hspace{.6in} [x_{l_{i-1,j-1}+\frac{1}{2}\gamma_{i-1,j-1}}&&-(x_{i-1}-x_{j-1})]^2\\
 &&\nonumber=[g(\xi_{31})-g(\xi_{32})]^2=h_1^2\{g'(\xi_{32})\}^2 + {\cal O}(h^3).
\end{eqnarray}
 Collecting (\ref{i-1,j+1}), (\ref{i+1,j-1}), (\ref{smooth12}) and (\ref{smooth32}) together, we obtain
\begin{eqnarray*}
 (\Delta x_{i}+&&\Delta x_{i-1})(\Delta x_{j+1}-\Delta x_{j})(x_{l_{i+1,j+1}+ \frac{1}{2}\gamma_{i+1,j+1}}-x_{i+1}+x_{j+1})^2\\
&&- (\Delta x_{i}+\Delta x_{i+1})(\Delta x_{j}-\Delta x_{j-1})(x_{l_{i-1,j-1}+\frac{1}{2}\gamma_{i-1,j-1}}-x_{i-1}+x_{j-1})^2\\
%=&&[2h^3 g'(\xi_i)g''(\xi_j)+{\cal O}(h^4)][h_1^2\{g'(\xi_{12})\}^2 + {\cal O}(h^3)]\\
%&&-[2h^3 g'(\xi_i)g''(\xi_j)+{\cal O}(h^4)][h_1^2\{g'(\xi_{32})\}^2 + {\cal O}(h^3)]\\
%=&&2 h^3 h_1^2 g'(\xi_i)g''(\xi_j)[\{g'(\xi_{12})\}^2-\{g'(\xi_{32})\}^2] + {\cal O}(h^6)\\
%=&&2 h^3 h_1^2 g'(\xi_i)g''(\xi_j)[\{g'(\xi_{12})\}^2-\{g'(\xi_{12}-2h)\}^2] + {\cal O}(h^6)\\
=&&-8 h^4 h_1^2 g'(\xi_i)g''(\xi_j)g'(\xi_{12})g''(\xi_{12})+ {\cal O}(h^6)= {\cal O}(h^6).
\end{eqnarray*}
Therefore, by substituting the preceding identity in (\ref{E125}) we estimate
\begin{equation}\label{Error1}
 E_{12}-E_{15}={\cal O}(\Delta x^3).
\end{equation}

Next, we need to compute $E_{13}-E_{16}$ defined in (\ref{Error E}).
In case of smooth grids, by using smooth transformations and Taylor's series expansions, it is easy to show that
%\begin{eqnarray*}
% [x_{l_{i,j}+\frac{1}{2}\gamma_{i,j}}-(x_{i}-x_{j})]^2=[g(\xi_{21})-g(\xi_{22})]^2=h_1^2\{g'(\xi_{22})\}^2 + {\cal O}(h^3),
%\end{eqnarray*}
%and
%\begin{eqnarray*}
% [x_{l_{i-1,j-1}+\frac{1}{2}\gamma_{i-1,j-1}}-(x_{i-1}-x_{j-1})]^2=[g(\xi_{31})-g(\xi_{32})]^2=h_1^2\{g'(\xi_{32})\}^2 + {\cal O}(h^3).
%\end{eqnarray*}
%Since
%\begin{eqnarray*}
% \xi_{22}-\xi_{32}= \mbox{ln}\bigg(\frac{x_{i}-x_{j}}{x_{i-1}-x_{j-1}}\bigg)=\mbox{ln}(r)=h.
%\end{eqnarray*}
%Now let us take
\vspace{-.40cm}
\begin{eqnarray*}
 [x_{l_{i,j}+\frac{1}{2}\gamma_{i,j}}-(x_{i}-x_{j})]^2-[x_{l_{i-1,j-1}+\frac{1}{2}\gamma_{i-1,j-1}}-(x_{i-1}-x_{j-1})]^2={\cal O}(h^3).
\end{eqnarray*}
which implies that $E_{16}={\cal O}(\Delta x^2)$. So we can take a second order approximation of $E_{16}$ by using Taylor's series expansion as
\begin{eqnarray*}
 E_{16}= \frac{1}{2}&&\sum_{j=2}^{i-1} n(x_{j+1})\Delta
x_{j}\frac{f(x_{i+1}-x_{j+1},x_{j+1})}{\Delta x_{i}+ \Delta x_{i-1}}\nonumber\\
&&\times \bigg\{(x_{l_{i,j}+
\frac{1}{2}\gamma_{i,j}}-x_{i}+x_{j})^2-(x_{l_{i-1,j-1}+
\frac{1}{2}\gamma_{i-1,j-1}}-x_{i-1}+x_{j-1})^2\bigg\}+ {\cal O}(\Delta x^3)\nonumber\\
\end{eqnarray*}
Similarly as before, we can have
$$[x_{l_{i+1,j+1}+\frac{1}{2}\gamma_{i+1,j+1}}-(x_{i+1}-x_{j+1})]^2-[x_{l_{i,j}+\frac{1}{2}\gamma_{i,j}}-(x_{i}-x_{j})]^2={\cal O}(h^3).$$
Therefore, again without loss of generality the summation in $E_{13}$ can be started from $j=2$ since the term we are dropping from the summation is third order accurate. Let us estimate
\begin{eqnarray}\label{E136}
\hspace{.6in} E_{13}&&-E_{16}=\\
 \frac{1}{2}&&\sum_{j=2}^{i-1} n(x_{j+1})\Delta x_{j}f(x_{i+1}-x_{j+1},x_{j+1})\nonumber\\
 \times&& \bigg[\frac{\{(x_{l_{i+1,j+1}+
\frac{1}{2}\gamma_{i+1,j+1}}-x_{i+1}+x_{j+1})^2-(x_{l_{i,j}+
\frac{1}{2}\gamma_{i,j}}-x_{i}+x_{j})^2\}}{\Delta x_{i}+ \Delta x_{i+1}}\nonumber\\
&&-\frac{\{(x_{l_{i,j}+
\frac{1}{2}\gamma_{i,j}}-x_{i}+x_{j})^2-(x_{l_{i-1,j-1}+
\frac{1}{2}\gamma_{i-1,j-1}}-x_{i-1}+x_{j-1})^2\}}{\Delta x_{i}+ \Delta x_{i-1}}\bigg]\nonumber\\
+&& {\cal O}(\Delta x^3).\nonumber
\end{eqnarray}
%To prove that $E_{13}-E_{16}={\cal O}(\Delta x^3)$, we need to show that
%\begin{eqnarray}\label{order5}
%(\Delta x_{i}+ \Delta x_{i-1}) \bigg\{(x_{l_{i+1,j+1}+
%\frac{1}{2}\gamma_{i+1,j+1}}-x_{i+1}+x_{j+1})^2-(x_{l_{i,j}+
%\frac{1}{2}\gamma_{i,j}}&&-x_{i}+x_{j})^2\bigg\}\nonumber\\
%-(\Delta x_{i}+ \Delta x_{i+1}) \bigg\{(x_{l_{i,j}+
%\frac{1}{2}\gamma_{i,j}}-x_{i}+x_{j})^2-(x_{l_{i-1,j-1}+
%\frac{1}{2}\gamma_{i-1,j-1}}&&-x_{i-1}+x_{j-1})^2\bigg\}\nonumber\\
%&& ={\cal O}(\Delta x^5).
%\end{eqnarray}
By using smooth transformation and Taylor's series expansions, we can easily obtain
%\begin{eqnarray*}
% [x_{l_{i+1,j+1}+\frac{1}{2}\gamma_{i+1,j+1}}-(x_{i+1}-x_{j+1})]^2&&=[g(\xi_{11})-g(\xi_{12})]^2\\
%&&=h_1^2\{g'(\xi_{12})\}^2 + h_1^3 g'(\xi_{12})g''(\xi_{12}) + {\cal O}(h^4),
%\end{eqnarray*}
%\begin{eqnarray*}
% [x_{l_{i,j}+\frac{1}{2}\gamma_{i,j}}-(x_{i}-x_{j})]^2&&=[g(\xi_{21})-g(\xi_{22})]^2\\
%&&=h_1^2\{g'(\xi_{22})\}^2+ h_1^3 g'(\xi_{22})g''(\xi_{22}) + {\cal O}(h^4),
%\end{eqnarray*}
%and
%\begin{eqnarray*}
% [x_{l_{i-1,j-1}+\frac{1}{2}\gamma_{i-1,j-1}}-(x_{i-1}-x_{j-1})]^2&&=[g(\xi_{31})-g(\xi_{32})]^2\\
%&&=h_1^2\{g'(\xi_{32})\}^2 ++ h_1^3 g'(\xi_{32})g''(\xi_{32})+ {\cal O}(h^4).
%\end{eqnarray*}
%Since
%\begin{eqnarray*}
%\xi_{12}-\xi_{22}&&= \mbox{ln}\bigg(\frac{x_{i+1}-x_{j+1}}{x_{i}-x_{j}}\bigg)=\mbox{ln}(r)=h\\
%&&= \mbox{ln}\bigg(\frac{x_{i}-x_{j}}{x_{i-1}-x_{j-1}}\bigg)= \xi_{22}-\xi_{32}.
%\end{eqnarray*}
%We substitute of all these values in the left hand side of (\ref{order5}) and use Taylor's series expansions to get
\begin{eqnarray*}
(\Delta x_{i}&&+ \Delta x_{i-1}) \{(x_{l_{i+1,j+1}+
\frac{1}{2}\gamma_{i+1,j+1}}-x_{i+1}+x_{j+1})^2-(x_{l_{i,j}+
\frac{1}{2}\gamma_{i,j}}-x_{i}+x_{j})^2\}\nonumber\\
&&-(\Delta x_{i}+ \Delta x_{i+1}) \{(x_{l_{i,j}+
\frac{1}{2}\gamma_{i,j}}-x_{i}+x_{j})^2-(x_{l_{i-1,j-1}+
\frac{1}{2}\gamma_{i-1,j-1}}-x_{i-1}+x_{j-1})^2\}\nonumber\\
%=&&[h g'(\xi_i)+{\cal O}(h^2)]\\
%&&\times\bigg[h_1^2\{(g'(\xi_{12}))^2-(g'(\xi_{22}))^2\}+h_1^3\{g'(\xi_{12})g''(\xi_{12})-g'(\xi_{22})g''(\xi_{22})\}+{\cal O}(h^4)\bigg]\\
%&&-[h g'(\xi_i)+{\cal O}(h^2)]\\
%&&\times\bigg[h_1^2\{(g'(\xi_{22}))^2-(g'(\xi_{32}))^2\}+h_1^3\{g'(\xi_{22})g''(\xi_{22})-g'(\xi_{32})g''(\xi_{32})\}+{\cal O}(h^4)\bigg]\\
%=&&[h g'(\xi_i)+{\cal O}(h^2)]\\
%&&\times\bigg[2h h_1^2 g'(\xi_{22})g''(\xi_{22})+h_1^3\bigg\{h\{(g''(\xi_{22}))^2+g'(\xi_{22})g'''(\xi_{22})\}\bigg\}+{\cal O}(h^4)\bigg]\\
%&&-[h g'(\xi_i)+{\cal O}(h^2)]\\
%&&\times\bigg[2h h_1^2 g'(\xi_{22})g''(\xi_{22})+ h_1^3\bigg\{h\{(g''(\xi_{22}))^2+g'(\xi_{22})g'''(\xi_{22})\}\bigg\}+{\cal O}(h^4)\bigg]\\
=&&2h^2 h_1^2 g'(\xi_{22})g''(\xi_{22})-2h^2 h_1^2 g'(\xi_{22})g''(\xi_{22})+{\cal O}(h^5)={\cal O}(h^5).
\end{eqnarray*}
Putting the above identity in equation (\ref{E136}), we have
\begin{equation}\label{Error2}
 E_{13}-E_{16}={\cal O}(\Delta x^3).
\end{equation}
 Now we shall approach to evaluate $E_{14}-E_{17}$. Since the terms appearing in the summation $E_{14}$ are third order accurate. Therefore, the summation in $E_{14}$ can be started from $j=2$. By using Taylor's series expansion in $E_{17}$ we consider
\begin{eqnarray}\label{E147}
 \hspace{.6in}E_{14}-E_{17}=\frac{1}{4}&&\sum_{j=2}^{i-1}\Delta x_j \frac{\partial}{\partial x'}\{n(x_{j+1})f(x_{i+1}-x_{j+1},x_{j+1})\}\\
&& \times \bigg[\frac{\Delta x_{j}+\Delta x_{j+1}}{\Delta x_{i}+\Delta x_{i+1}}
-\frac{\Delta x_{j}+\Delta x_{j-1}}{\Delta x_{i}+\Delta x_{i-1}}\bigg](x_{l_{i,j}+
\frac{1}{2}\gamma_{i,j}}-x_i+x_j)^2\nonumber\\
&&\nonumber+ {\cal O}(\Delta x^3).
\end{eqnarray}
%To prove that $E_{14}-E_{17}={\cal O}(\Delta x^3)$, we have to show that
%
%\begin{equation}\label{order3}
%(\Delta x_{i}+\Delta x_{i-1})(\Delta x_{j}+\Delta x_{j+1})-(\Delta x_{i}+\Delta x_{i+1})(\Delta x_{j}+\Delta x_{j-1}) ={\cal O}(\Delta x^3).
%\end{equation}
Analogous to equations (\ref{smooth12}) and (\ref{smooth32}), we estimate
\begin{equation}\label{lij}
(x_{l_{i,j}+\frac{1}{2}\gamma_{i,j}}-x_i+x_j)^2={\cal O}(h^2).
\end{equation}
By applying the smooth transformation and Taylor's series expansion, we have
%\begin{eqnarray*}
% \Delta x_{j}+\Delta x_{j-1}=2h g'(\xi_j)-h^2 g''(\xi_j)+{\cal O}(h^3),
%\end{eqnarray*}
%and
%\begin{eqnarray*}
% \Delta x_{i}+\Delta x_{i+1}= 2h g'(\xi_j)+h^2 g''(\xi_j)+{\cal O}(h^3).
%\end{eqnarray*}
%Furthermore, we obtain
\begin{equation}\label{1,i-1,j+1}
(\Delta x_{i}+\Delta x_{i-1})(\Delta x_{j}+\Delta x_{j+1})=4h^2 g'(\xi_i)g'(\xi_j)+{\cal O}(h^3),
\end{equation}
and
\begin{equation}\label{1,i+1,j-1}
(\Delta x_{i}+\Delta x_{i+1})(\Delta x_{j}+\Delta x_{j-1})=4h^2 g'(\xi_i)g'(\xi_j)+{\cal O}(h^3).
\end{equation}
Substituting the identities (\ref{lij}), (\ref{1,i-1,j+1}) and (\ref{1,i+1,j-1}) in equation (\ref{E147}), we obtain
\begin{equation}\label{Error3}
E_{14}-E_{17}={\cal O}(\Delta x^3).
\end{equation}
Finally, we have to estimate $E_{11}-E_{18}$ in equation (\ref{Error E}). %Let us begin with
%\begin{eqnarray*}
%E_{11}-E_{18}=\frac{1}{2}n(t,x_{1})\Delta x_{1} \bigg[&&\frac{(x_{l_{i+1,1}+ \frac{1}{2}\gamma_{i+1,1}}-x_{i+1}+x_{1})^2}{\Delta x_{i}+\Delta x_{i+1}}\\
%&&-\frac{(x_{l_{i,1}+
%\frac{1}{2}\gamma_{i,1}}-x_i+x_1)^2}{\Delta x_{i}+\Delta x_{i-1}}\bigg]f(t,x_{i+1}-x_{1},x_{1}).
%\end{eqnarray*}
Since the geometric grids considered here are monotonically increasing. So, the points $x_{i}-x_{1}$ and $x_{i+1}-x_{1}$ lie in $i$th and $(i+1)$th cells, respectively. By using the definition of the indices $l_{i,j}$ and $\gamma_{i,j}$ in (\ref{defi of index fpt}), we have $l_{i,1}=i$, $l_{i+1,1}=i+1$ and $\gamma_{i,1}=\gamma_{i+1,1}= -1$. Then the expression for $E_{11}-E_{18}$ can be written as
 \begin{eqnarray}\label{E118}
\hspace{.6in}E_{11}-E_{18}=\frac{1}{2}n(t,x_{1})\Delta x_{1}\bigg[&&\frac{(x_{i+1/2}-x_{i+1}+x_{1})^2}{\Delta x_{i}+\Delta x_{i+1}}\\
&&\nonumber-\frac{(x_{i-1/2}-x_i+x_1)^2}{\Delta x_{i}+\Delta x_{i-1}}\bigg]f(t,x_{i+1}-x_{1},x_{1}).
\end{eqnarray}
%Now we remain to show that
%\begin{equation}{\label{error 51}}
%(\Delta x_{i}+\Delta x_{i-1})(x_{i+1/2}-x_{i+1}+x_{1})^2-(\Delta x_{i}+\Delta x_{i+1})(x_{i-1/2}-x_i+x_1)^2={\cal O}(\Delta x^4).
%\end{equation}
Let us use the following formula and the identities mentioned before (\ref{lde smooth}) to get
\begin{eqnarray*}
(\Delta x_{i}+\Delta x_{i-1})(&&x_{i+1/2}-x_{i+1}+x_{1})^2-(\Delta x_{i}+\Delta x_{i+1})(x_{i-1/2}-x_i+x_1)^2\\
=&&\frac{1}{4}[\Delta x_{i}(\Delta x_{i+1}+\Delta x_{i})(\Delta x_{i+1}-\Delta x_{i})-{\Delta x_{1}}^2(\Delta x_{i+1}-\Delta x_{i-1})\\
&&+(2\Delta x_{1}-\Delta x_{i+1})({\Delta x_{i}}^2-\Delta x_{i-1}\Delta x_{i+1})]\\
=&&{\cal O}(h^4).
\end{eqnarray*}
Inserting the above identity in equation (\ref{E118}), we estimate
\begin{equation}{\label{Error4}}
E_{11}-E_{18}={\cal O}(\Delta x^3).
\end{equation}
Finally, we substitute (\ref{Error1}), (\ref{Error2}), (\ref{Error3}) and (\ref{Error4}) into (\ref{Error E}) to obtain
\begin{eqnarray*}
E_1={\cal O}(\Delta x^3).
\end{eqnarray*}
Similar to $E_1$, we can evaluate $E_2$ defined in (\ref{comb E1,E2}) as
\begin{eqnarray*}
E_2={\cal O}(\Delta x^3).
\end{eqnarray*}
Therefore, substituting $E_1$ and $E_2$ in (\ref{comb E1,E2}), we obtain
 \begin{eqnarray*}
E={\cal O}(\Delta x^3).
\end{eqnarray*}
Now let us consider $E'$ from (\ref{birth term 2}) and set $f_x(t,x,y):=\frac{\partial}{\partial x}\{K(x,y)n(t,x)\}$. Then we replace $j$ by $j-1$ in the second term on the right hand side and apply Taylor series expansion about $x_j=x_{j-1}$ in $n(t,x_j)f_{x'}(t,x_k,x_{j})$ of the first term. Further we rearrange the expression for $E'$ to estimate
%\begin{eqnarray*}
% E'=&&-\frac{1}{12}\sum_{j=2}^{i-1}n(t,x_{j-1})\Delta x_j\sum_{k=l_{i,j}+ \frac{1}{2}(1+\gamma_{i,j})}^{l_{i+1,j}+ \frac{1}{2}(\gamma_{i+1,j}-1)}
%\frac{{\Delta x_k}^3}{\Delta x_{i}+\Delta x_{i+1}}f_{x'}(t,x_k,x_{j-1})\\
%&&+\frac{1}{12}\sum_{j=2}^{i-1}n(t,x_{j-1})\Delta x_{j-1}\sum_{k=l_{i-1,j-1}+ \frac{1}{2}(1+\gamma_{i-1,j-1})}^{l_{i,j-1}+ \frac{1}{2}(\gamma_{i,j-1}-1)}
%\frac{{\Delta x_k}^3}{\Delta x_{i}+\Delta x_{i-1}}f_{x'}(t,x_k,x_{j-1})\\
%&&+{\cal O}(\Delta x^3).
%\end{eqnarray*}
%This can be rewritten as
\begin{eqnarray*}
 E'=&&\frac{1}{12}\sum_{j=2}^{i-1}n(t,x_{j-1})(\Delta x_{j-1}-\Delta x_j)\sum_{k=l_{i,j}+ \frac{1}{2}(1+\gamma_{i,j})}^{l_{i+1,j}+ \frac{1}{2}(\gamma_{i+1,j}-1)}
\frac{{\Delta x_k}^3}{\Delta x_{i}+\Delta x_{i+1}}f_{x'}(t,x_k,x_{j-1})\\
&&-\frac{1}{12}\sum_{j=2}^{i-1}n(t,x_{j-1})\Delta x_{j-1}\sum_{k=l_{i,j}+ \frac{1}{2}(1+\gamma_{i,j})}^{l_{i+1,j}+ \frac{1}{2}(\gamma_{i+1,j}-1)}
\frac{{\Delta x_k}^3}{\Delta x_{i}+\Delta x_{i+1}}f_{x'}(t,x_k,x_{j-1})\\
&&+\frac{1}{12}\sum_{j=2}^{i-1}n(t,x_{j-1})\Delta x_{j-1}\sum_{k=l_{i-1,j-1}+ \frac{1}{2}(1+\gamma_{i-1,j-1})}^{l_{i,j-1}+ \frac{1}{2}(\gamma_{i,j-1}-1)}
\frac{{\Delta x_k}^3}{\Delta x_{i}+\Delta x_{i-1}}f_{x'}(t,x_k,x_{j-1})\\
&&+{\cal O}(\Delta x^3).
\end{eqnarray*}
For smooth grids, we have $\Delta x_{j-1}- \Delta x_j={\cal O}(\Delta x^2)$. Therefore, the first term on the right hand
side of the above equation is of third order. By using the Taylor series expansion about $x_k=x_j$ in $f_{x'}(t,x_k,x_{j-1})$ in the
remaining terms on the right hand side, the above equation can be further rewritten as
\begin{eqnarray}\label{E'}
 \hspace{.6in}E'=\frac{1}{12}\sum_{j=2}^{i-1}&&n(t,x_{j-1})\Delta x_{j-1}\bigg[\sum_{k=l_{i-1,j-1}+ \frac{1}{2}(1+\gamma_{i-1,j-1})}^{l_{i,j-1}+ \frac{1}{2}(\gamma_{i,j-1}-1)}
\frac{{\Delta x_k}^3}{\Delta x_{i}+\Delta x_{i-1}}\\
&&\nonumber-\sum_{k=l_{i,j}+ \frac{1}{2}(1+\gamma_{i,j})}^{l_{i+1,j}+ \frac{1}{2}(\gamma_{i+1,j}-1)}
\frac{{\Delta x_k}^3}{\Delta x_{i}+\Delta x_{i+1}}\bigg]f_{x'}(t,x_j,x_{j-1})\nonumber+{\cal O}(\Delta x^3).
\end{eqnarray}
%To prove $E'={\cal O}(\Delta x^3)$, we need to show that
%\begin{equation}\label{order3E'}
% \sum_{k=l_{i-1,j-1}+ \frac{1}{2}(1+\gamma_{i-1,j-1})}^{l_{i,j-1}+ \frac{1}{2}(\gamma_{i,j-1}-1)}
%\frac{{\Delta x_k}^3}{\Delta x_{i}+\Delta x_{i-1}}
%-\sum_{k=l_{i,j}+ \frac{1}{2}(1+\gamma_{i,j})}^{l_{i+1,j}+ \frac{1}{2}(\gamma_{i+1,j}-1)}
%\frac{{\Delta x_k}^3}{\Delta x_{i}+\Delta x_{i+1}}={\cal O}(\Delta x^3).
%\end{equation}
For non-uniform smooth grids of the type $x_{i+1/2}=r x_{i-1/2}$, $r>1$, $i=1, \ldots, I$, we have
$$x_{i+1}-x_j=r(x_{i}-x_{j-1})\ \ \mbox{and}\ \ x_{i+1}-x_j \in \Lambda_{l_{i+1,j}},\ \ x_{i}-x_{j-1}\in \Lambda_{l_{i,j-1}}.$$
Therefore, we obtain $l_{i+1,j}=l_{i,j-1}+1$. Similarly, we can have $l_{i,j}=l_{i-1,j-1}+1$.
In case of such smooth grids, it can easily be seen that $\gamma_{i-1,j-1}=\gamma_{i,j}$ and $\gamma_{i,j-1}=\gamma_{i+1,j}$.\\
Let $k_1:=l_{i-1,j-1}+\frac{1}{2}(1+\gamma_{i-1,j-1})$, $k_2:=l_{i,j}+\frac{1}{2}(1+\gamma_{i,j})=k_1+1$, \ldots, $k_p:=l_{i,j-1}+\frac{1}{2}(\gamma_{i,j-1}-1)=k_{p-1}+1$,
$K_{p+1}:=l_{i+1,j}+\frac{1}{2}(\gamma_{i+1,j}-1)=k_{p}+1$. Then we can write
\begin{eqnarray}\label{E'1}
\hspace{.6in} \sum_{k=l_{i-1,j-1}+ \frac{1}{2}(1+\gamma_{i-1,j-1})}^{l_{i,j-1}+ \frac{1}{2}(\gamma_{i,j-1}-1)}
\frac{{\Delta x_k}^3}{\Delta x_{i}+\Delta x_{i-1}}&&
-\sum_{k=l_{i,j}+ \frac{1}{2}(1+\gamma_{i,j})}^{l_{i+1,j}+ \frac{1}{2}(\gamma_{i+1,j}-1)}
\frac{{\Delta x_k}^3}{\Delta x_{i}+\Delta x_{i+1}}\\
=\bigg[\frac{{\Delta x_{k_1}}^3}{\Delta x_{i}+\Delta x_{i-1}}&&-\frac{{\Delta x_{k_2}}^3}{\Delta x_{i}+\Delta x_{i+1}}\bigg]+\ldots\nonumber\\
&&+\bigg[\frac{{\Delta x_{k_p}}^3}{\Delta x_{i}+\Delta x_{i-1}}-\frac{{\Delta x_{k_{p+1}}}^3}{\Delta x_{i}+\Delta x_{i+1}}\bigg]\nonumber\\
=\sum_{m=1}^{p}\bigg[\frac{{\Delta x_{k_m}}^3}{\Delta x_{i}+\Delta x_{i-1}}&&-\frac{{\Delta x_{k_{m+1}}}^3}{\Delta x_{i}+\Delta x_{i+1}}\bigg]\nonumber
\end{eqnarray}
where $k_{m+1}=k_m+1$.
%To show that (\ref{order3E'}) holds, one has to prove that
%\begin{equation}\label{order5E'}
% (\Delta x_{i}+\Delta x_{i+1})\Delta x_{k_m}^3-(\Delta x_{i}+\Delta x_{i-1})\Delta x_{k_{m+1}}^3={\cal O}(\Delta x^5).
%\end{equation}
Let $\xi_{m1}$, $\xi_{m2}$, $\xi_{m3}$ are the corresponding points on the uniform mesh for $x_{k_m+3/2}$, $x_{k_m +1/2}$ and
$x_{k_m-1/2}$, respectively.
Since
\begin{eqnarray*}
 \xi_{m1}-\xi_{m2}=\mbox{ln}\bigg(\frac{x_{k_m+3/2}}{x_{k_m +1/2}}\bigg)=\mbox{ln}(r)=h
=\mbox{ln}\bigg(\frac{x_{k_m+1/2}}{x_{k_m -1/2}}\bigg)=\xi_{m2}-\xi_{m3}.
\end{eqnarray*}
Then by using smooth transformations and Taylor's series expansions, we can have
\begin{equation}\label{k,k+1}
 (\Delta x_{i}+\Delta x_{i+1})\Delta x_{k_m}^3-(\Delta x_{i}+\Delta x_{i-1})\Delta x_{k_{m+1}}^3={\cal O}(h^5).
\end{equation}
Therefore, by using (\ref{k,k+1}) and (\ref{E'1}) in equation (\ref{E'}), we estimate
\begin{eqnarray*}
 E'={\cal O}(\Delta x^3).
\end{eqnarray*}
Thus, we insert the value of $E$ and $E'$ in equation (\ref{lde smooth}) to obtain
\begin{eqnarray*}
 \sigma_i={\cal O}(h^3).
\end{eqnarray*}

Finally, analogous to the uniform mesh, the technique is second order consistent.
 \subsubsection{Locally uniform mesh}
% \begin{figure}
%   \centering
%   \input{CA/LocallyUniformMesh}
%   \caption{Locally uniform smooth mesh.}\label{p:LocallyUniformMesh fpt}
% \end{figure}

  Let us explain an example of a locally uniform mesh. First, the
 computational domain is split into many finite sub-domains and each sub-domain is further split into an equal size mesh.
 This gives us a locally uniform mesh. It is not easy to analyze the order of consistency on locally uniform mesh.
 So we calculate it later numerically. In this case, the scheme gives only first order consistency.

\subsubsection{Oscillatory mesh}
A mesh is called oscillatory mesh, if for any $r \neq 1 > 0$, we have
\begin{eqnarray*}
\Delta x_{i+1}:=
\left\{
    \begin{array}{ll}
        r\Delta x_{i} & \mbox{if } \ i\ \ \mbox{is odd}, \\
        \frac{1}{r}\Delta x_{i} & \mbox{if } \ i\ \ \mbox{is even}.
    \end{array}
\right.
\end{eqnarray*}

%\begin{align*}
%\Delta x_{i+1}:=\begin{cases}
%r\Delta x_{i}\ \, & \text{if}\ i\ \ \text{is odd}, \\
%\frac{1}{r}\Delta x_{i}\ \, &  \text{if}\ i\ \ \text{is even}.
%\end{cases}
%\end{align*}
Since there is no cancellation in the leading error terms of equations (\ref{E:ConsistencyOrder_1})-(\ref{E:ConsistencyOrder_I}) as well as $E={\cal O}(\Delta x)$ and $E'={\cal O}(\Delta x^2)$  we have
\begin{eqnarray*}
\sigma_i(t)={\cal O}(\Delta x),\ \ i=1, \ldots, I, \ \ \mbox{and}\ \
\|\mathbf{\sigma}(t)\| = {\cal O}(1).
\end{eqnarray*}
Therefore the fixed pivot method is unfortunately inconsistent on oscillatory meshes.
 %This type of mesh was not considered in J. Kumar and Warnecke \cite{JKUMARNM:2008} for breakage problems. We have repeated such meshes on the local discretization error obtained in the above mentioned paper. We observed that the fixed pivot method is inconsistent on such meshes. These observations have been also verified numerically.
\subsubsection{Non-uniform random mesh}
Finally the scheme is examined on non-uniform random grids. Similar to the case of oscillatory mesh, we have $\|\mathbf{\sigma}(t)\| = {\cal O}(1)$.
Thus the method is again inconsistent on non-uniform random meshes.

\section{Lipschitz conditions on $\hat{\mathbf{B}}({\mathbf{N}}(t))$ and $\hat{\mathbf{D}}({\mathbf{N}}(t))$}\label{convergence fpt}

Let us consider the birth term for $0 \leq t \leq T$ and for all $\mathbf{N}$, $\hat{\mathbf{N}}\in \mathbb{R}^I$
\begin{eqnarray*}
 \|\hat{\mathbf{B}}({\mathbf{N}}(t))-\hat{\mathbf{B}}({\hat{\mathbf{N}}}(t))\|= \sum_{i=1}^{I}|\hat{B}_i(\mathbf{N}(t))-\hat{B}_i(\hat{\mathbf{N}}(t))|.
\end{eqnarray*}

From (\ref{cond on beta}), there exists a $L>0$ such that $K(x,y) \leq L$ for all $x,y \in ]0,x_{\mbox{max}}]$. Then by using this upper bound $L$ and $0\leq \lambda_i^{\pm}(x) \leq 1$ from the definition in (\ref{values of lambda fpt}), we obtain from (\ref{resultant system})
\begin{eqnarray*}
\|\hat{\mathbf{B}}({\mathbf{N}}(t))-\hat{\mathbf{B}}({\hat{\mathbf{N}}}(t))\| \leq &&\frac{1}{2}L\sum_{i=1}^{I}  \sum_{j=1}^{i} \sum_{x_i\leq x_j+x_k < x_{i+1}} |N_j(t) N_k(t)-\hat{N}_j(t) \hat{N}_k(t)|\\
&& + \frac{1}{2}L\sum_{i=1}^{I}  \sum_{j=1}^{i-1} \sum_{x_{i-1}\leq x_j+x_k < x_{i}} |N_j(t) N_k(t)-\hat{N}_j(t) \hat{N}_k(t)|\\
\leq && L\sum_{j=1}^{I}\sum_{k=1}^{I}|N_j(t) N_k(t)-\hat{N}_j(t)\hat{N}_k(t)|.
\end{eqnarray*}

Now we enjoy a useful equality $N_j(t) N_k(t)-\hat{N}_j(t)\hat{N}_k(t) = \frac{1}{2}[(N_j(t)+\hat{N}_j(t))(N_k(t)-\hat{N}_k(t)) +(N_j(t)-\hat{N}_j(t))(N_k(t)+\hat{N}_k(t))]$ to get

\begin{eqnarray}\label{1st lipschitz condition for B fpt}
\hspace{.6in}\|\hat{\mathbf{B}}({\mathbf{N}}(t))-\hat{\mathbf{B}}({\hat{\mathbf{N}}}(t))\| \leq \frac{1}{2} L\sum_{j=1}^{I}\sum_{k=1}^{I}\bigg[&&|(N_j(t)+\hat{N}_j(t))| |(N_k(t)-\hat{N}_k(t))| \\
 &&\nonumber+ |(N_j(t)-\hat{N}_j(t))| |(N_k(t)+\hat{N}_k(t))|\bigg].
\end{eqnarray}

It can be easily shown that the total number of particles decreases in a coagulation process, i.e.\ $\sum_{j=1}^{I}N_j(t) \leq N_T^0:= \mbox{Total number of particles which are taken initially}$. Therefore, the equation (\ref{1st lipschitz condition for B fpt}) can be rewritten as

\begin{eqnarray}\label{lipschitz condition for B fpt}
\hspace{.6in}\|\hat{\mathbf{B}}({\mathbf{N}}(t))-\hat{\mathbf{B}}({\hat{\mathbf{N}}}(t))\| && \leq  N_T^0 L\bigg[ \sum_{k=1}^{I}|(N_k(t)-\hat{N}_k(t))|+ \sum_{j=1}^{I}|(N_j(t)-\hat{N}_j(t))|\bigg]\\
&& \nonumber\leq 2 N_T^0 L \|\mathbf{N}(t)-\hat{\mathbf{N}}(t)\|.
\end{eqnarray}

 Finally, we consider the death term

\begin{eqnarray*}
\|\hat{\mathbf{D}}({\mathbf{N}}(t))-\hat{\mathbf{D}}({\hat{\mathbf{N}}}(t))\|&&=\sum_{i=1}^{I}|\hat{D}_i(\mathbf{N}(t))-\hat{D}_i(\hat{\mathbf{N}}(t))|\\
&&\leq \sum_{i=1}^{I}\sum_{j=1}^{I}K(x_i,x_j)|N_i(t)N_j(t)-\hat{N}_i(t) \hat{N}_j(t)|\\
&&\leq L \sum_{i=1}^{I}\sum_{j=1}^{I}|N_i(t)N_j(t)-\hat{N}_i(t) \hat{N}_j(t)|.
\end{eqnarray*}

Again we use the same equality as before to get
\begin{equation}\label{lipschitz condition for D fpt}
 \|\hat{\mathbf{D}}({\mathbf{N}}(t))-\hat{\mathbf{D}}({\hat{\mathbf{N}}}(t))\|\leq 2 N_T^0 L\|\mathbf{N}(t)-\hat{\mathbf{N}}(t)\|.
\end{equation}
Therefore, the application of Theorem \ref{convergence theorem} implies the convergence of the fixed pivot technique for aggregation PBE (\ref{trunc pbe fpt}) and the convergence is of same order as the consistency.

\section{Numerical examples}\label{numerics fpt}
We now justify our mathematical results on the convergence by taking
a few numerical examples where we numerically evaluate the experimental order of
convergence (EOC). A detailed comparison of numerical
 results of number density and moments with analytical solutions can be found in \cite{Kumar:1996-1, J_Kumar:2006_Thesis}. All the numerical experiments are performed as in \cite{JKUMARNM:2008}.

First, we consider the following normally distributed initial
condition (NIC)
\begin{eqnarray*}
n(0, x) = \frac{1}{\sigma \sqrt{2\pi}} \exp\left[-\frac{(x-\mu)^2}{2\sigma^2}\right].
\end{eqnarray*}
In addition, we take the following aggregation sum and product kernels
\begin{equation}\label{sum kernel}
K(x,y)=k_0 (x+y) \ \ \mbox{and}\ \ K(x,y)=k_0 xy.
\end{equation}
Since analytical solutions are not available for the above initial
condition and aggregation kernels, we use the following formula in
order to calculate the experimental order of convergence

\begin{eqnarray*}
\mbox{EOC} = \ln\left(\frac{\|\hat{\mathbf{N}}_h - \hat{\mathbf{N}}_{h/2}\|}{\|\hat{\mathbf{N}}_{h/2} -
\hat{\mathbf{N}}_{h/4}\|}\right)\Big/\ln(2).
\end{eqnarray*}
Here $\hat{\mathbf{N}}_h$ represents the numerical solution on a
uniform mesh of width $h$. The other parameters are $\sigma^2 = 0.01$, $\mu = 1$
and $k_0 = 1$. Now we will calculate the EOC on five different types of uniform and non-uniform meshes.

Let us first calculate the EOC for uniform meshes. For a uniform mesh, we fix
 the minimum and maximum values of $x$ as $0$ and $15$,
respectively in the numerical computation. The number of grid points is denoted by
\textbf{GP} in the following tables. The numerical results are shown in Table \ref{T:Case1}. As expected
from the mathematical analysis, the numerical results show the convergence of second order.

Let us now consider the second case of non-uniform smooth meshes. In particular, we took the case of geometric
 grids which can be obtained by applying an exponential smooth transformation as $x = \exp(\xi)$. Here $\xi$ is the
variable with uniform grids. The computational domain in this
case is set as $[1e-6, 1000]$ which corresponds to the $\xi$
domain $[\ln(1e-6), \ln(1000)]$. The numerical results have been summarized in Table \ref{T:Case2}.
Once again the numerical results show that the fixed
 pivot technique gives second order convergence on non-uniform smooth meshes.

\begin{table}[h] \scriptsize
\begin{minipage}{50mm}
 \caption{Uniform grids (NIC)}
% \label{T:Case1}
\begin{tabular}{@{}lll@{}}
\centering
\subtable[$K(x,y)=k_0 (x+y)$]{
 \centering
\begin{tabular}{@{}lll@{}}
\hline
 GP & Error $L_1$&\textbf{EOC}\\
\hline
 60 & - & -     \\
120 & 0.0598 & - \\
240 & 0.0178 & 1.75 \\
480 & 5.0E-3 & 1.82 \\
960 & 1.3E-3 & 1.95 \\
 \hline
\end{tabular}
}
\subtable[$K(x,y)=k_0 xy$]{
 \centering
\begin{tabular}{@{}lll@{}}
\hline
 Error $L_1$&\textbf{EOC}\\
\hline
 - & -     \\
 0.0306 & - \\
 8.4E-3 & 1.86 \\
 2.3E-3 & 1.89 \\
 6.0E-4 & 1.95 \\
 \hline
\end{tabular}
}
\end{tabular}
%\caption{Uniform grids (NIC)}
\label{T:Case1}
\end{minipage}
\hfil
\begin{minipage}{50mm}
%\setfloattype{table}
\caption{Non-uniform smooth grids (NIC)}
\begin{tabular}{@{}lll@{}}
\centering
\subtable[$K(x,y)=k_0 (x+y)$]{
 \centering
\begin{tabular}{@{}lll@{}}
\hline
 GP & Error $L_1$&\textbf{EOC}\\
\hline
 60 & - & -     \\
120 & 0.0456 & - \\
240 & 0.0118 & 1.95 \\
480 & 3.0E-3 & 1.97 \\
960 & 7.6E-4 & 1.98 \\
 \hline
\end{tabular}
}
\subtable[$K(x,y)=k_0 xy$]{
 \centering
\begin{tabular}{@{}lll@{}}
\hline
 Error $L_1$&\textbf{EOC}\\
\hline
- & -     \\
 0.0374 & - \\
 9.6E-3 & 1.96 \\
 2.4E-3 & 2.00 \\
 6.0E-4 & 2.00 \\
 \hline
\end{tabular}
}
\end{tabular}
\label{T:Case2}
\end{minipage}
\end{table}

The third test case has been performed on a locally uniform mesh using the same computational
domain as is in the previous case. In this case we started the computation on $30$ geometric mesh points,
and then each cell was divided into two equal parts in the further refined levels of computation. In this way we obtained a locally uniform mesh. The
EOC has been summarized in Table \ref{T:Case3}. Table \ref{T:Case3} clearly shows that the fixed pivot technique is
only first order accurate.

Now we consider the fourth case of an oscillatory mesh to
evaluate the EOC. Let us take an example of oscillatory mesh, i.e.\
\vspace{-.20cm}
\begin{eqnarray*}
\Delta x_{i+1}:=
\left\{
    \begin{array}{ll}
        2\Delta x_{i} & \mbox{if } \ i\ \ \mbox{is odd}, \\
        \frac{1}{2}\Delta x_{i} & \mbox{if } \ i\ \ \mbox{is even}.
    \end{array}
\right.
\end{eqnarray*}
 Here the computational domain is the same as for the first case.
 First, we divide the computational domain in $30$
equidistant mesh points, and then each cell into two parts with $1:2$ as per further refined levels of computation.
 The numerical results has been shown in Table
\ref{T:Case4}. As expected, Table \ref{T:Case4} exhibits that the fixed pivot
technique is not convergent on oscillatory meshes.\\
\begin{table}[h] \scriptsize
\begin{minipage}{50mm}
\caption{Locally uniform grids (NIC)}
\begin{tabular}{@{}lll@{}}
\centering
\subtable[$K(x,y)=k_0 (x+y)$]{
 \centering
\begin{tabular}{@{}lll@{}}
\hline
 GP & Error $L_1$&\textbf{EOC}\\
\hline
 60 & - & -     \\
120 & 0.0416 & - \\
240 & 0.0212 & 0.97 \\
480 & 0.0105 & 1.01 \\
960 & 5.1E-3 & 1.04 \\
 \hline
\end{tabular}
}
\subtable[$K(x,y)=k_0 xy$]{
 \centering
\begin{tabular}{@{}lll@{}}
\hline
 Error $L_1$&\textbf{EOC}\\
\hline
  - & -     \\
 0.0254 & - \\
 0.0126 & 1.01 \\
 6.0E-3 & 1.07 \\
 2.8E-3 & 1.09 \\
 \hline
\end{tabular}
}
\end{tabular}
\label{T:Case3}
\end{minipage}
\hfil
\begin{minipage}{50mm}
\caption{Oscillatory grids (NIC)}
\begin{tabular}{@{}lll@{}}
\centering
\subtable[$K(x,y)=k_0 (x+y)$]{
 \centering
\begin{tabular}{@{}lll@{}}
\hline
 GP & Error $L_1$&\textbf{EOC}\\
\hline
60 & - & - \\
120 & 0.0650 & - \\
 240 & 0.0632 & 0.04 \\
 480 & 0.0523 & 0.27 \\
 960 & 0.0518 & 0.01 \\
 \hline
\end{tabular}
}
\subtable[$K(x,y)=k_0 xy$]{
 \centering
\begin{tabular}{@{}lll@{}}
\hline
 Error $L_1$&\textbf{EOC}\\
\hline
- & - \\
 0.0347 & - \\
  0.0309 & 0.16 \\
 0.0257 & 0.26 \\
  0.0253 & 0.01 \\
 \hline
\end{tabular}
}
\end{tabular}
\label{T:Case4}
\end{minipage}
\end{table}
Finally we consider the fifth case of a non-uniform random mesh. The
computations have been performed on the same domain as for the second case.
We started again with $30$ geometric mesh points,
  and then each cell was divided into two parts of random width in the further
   refined levels of computation. For each value of $I = 60, 120, 240, 480$,
   we performed five runs on different random grids and the relative $L_1$ errors
   were measured. The mean of these errors over five runs is used to calculate the EOC.
   The numerical results have been shown in Table \ref{T:Case5}. Table \ref{T:Case5} shows clearly that the fixed pivot technique is not convergent.

Next, we take an exponentially
decreasing initial condition (EIC), namely $n(0,x)=\exp(-\alpha x)$. The sum and product aggregation kernels in (\ref{sum kernel}) are again considered.
Since the analytical solution is known for the above initial conditions and kernels which can be found in \cite{Scott:1968, Aldous:1999}. Then the
experimental order of convergence can be determined by the formula $\mbox{EOC} = \ln(E_{I} / E_{2I}) / \ln(2)$,
where $E_{I}$ and $E_{2I}$ are the $L_1$ error norms. The subscripts $I$ and $2I$ correspond to the degrees of freedom.
We can calculate the error $E_{I}$ on a mesh with $I$ cells. The relative error has been calculated by dividing
the error $\|\mathbf{N}-\hat{\mathbf{N}}\|$ by $\|\mathbf{N}\|$. The parameter $\alpha=10$ is taken in the above initial condition. Again we computed the EOC on five different type of meshes as before.

In case of uniform mesh, we set the computational domain as [0,30] to evaluate the EOC numerically. The numerical results are summarized in Table \ref{T:Case11}. Again, we obtain the convergence of second order numerically.

\begin{table}[h]\scriptsize
\begin{minipage}{50mm}
\caption{Non-uniform random grids (NIC)}
\begin{tabular}{@{}lll@{}}
\centering
\subtable[$K(x,y)=k_0 (x+y)$]{
 \centering
\begin{tabular}{@{}lll@{}}
\hline
 GP & Error $L_1$&\textbf{EOC}\\
\hline
60 & - & -     \\
120 & 0.0229 & - \\
240 & 0.0375 & -0.71 \\
480 & 0.0406 & -0.11 \\
960 & 0.0402 & 0.01 \\
 \hline
\end{tabular}
}
\subtable[$K(x,y)=k_0 xy$]{
 \centering
\begin{tabular}{@{}lll@{}}
\hline
 Error $L_1$&\textbf{EOC}\\
\hline
 - & -  \\
 0.0112 & - \\
 9.4E-3 & 0.25 \\
 0.0135 & -0.51 \\
 0.0129 & 0.06 \\
 \hline
\end{tabular}
}
\end{tabular}
\label{T:Case5}
\end{minipage}
\hfil
\begin{minipage}{50mm}
\caption{Uniform grids (EIC)}
\begin{tabular}{@{}lll@{}}
\centering
\subtable[$K(x,y)=k_0 (x+y)$]{
 \centering
\begin{tabular}{@{}lll@{}}
\hline
 GP & Error $L_1$&\textbf{EOC}\\
\hline
60 & 0.0486 & - \\
120 & 0.0135 & 1.84 \\
240 & 3.5E-3 & 1.94 \\
480 & 9.0E-4 & 1.96 \\
 \hline
\end{tabular}
}
\subtable[$K(x,y)=k_0 xy$]{
 \centering
\begin{tabular}{@{}lll@{}}
\hline
 Error $L_1$&\textbf{EOC}\\
\hline
 0.0274 & - \\
 7.2E-3 & 1.92 \\
 1.9E-3 & 1.92 \\
 4.8E-4 & 1.98 \\
 \hline
\end{tabular}
}
\end{tabular}
\label{T:Case11}
\end{minipage}
\end{table}

Let us now evaluate the EOC on geometric grids which is a particular case of non-smooth grids. The numerical computations
 have been performed on the same computational domain as is for the case of geometric grids considered with the normal initial condition. The numerical results are presented in Table \ref{T:Case21} which shows once again the convergence of second order.

The EOC has been computed once
more on locally uniform, oscillatory and random meshes for the case mentioned above. The computational domain for locally uniform and random meshes is identical as for the previous case. However, we perform the computations on an oscillatory mesh using the same domain as is for the uniform mesh. The numerical result are demonstrated in Tables \ref{T:Case31}, \ref{T:Case41} and \ref{T:Case51}. These tables show that we acquire the convergence of first order on locally uniform mesh while the fixed pivot technique is zero order convergent on oscillatory and random meshes.\\
\begin{table}[h]\scriptsize
\begin{minipage}{50mm}
\caption{Non-uniform smooth grids (EIC)}
\begin{tabular}{@{}lll@{}}
\centering
\subtable[$K(x,y)=k_0 (x+y)$]{
 \centering
\begin{tabular}{@{}lll@{}}
\hline
 GP & Error $L_1$&\textbf{EOC}\\
\hline
 60 & 7.1E-3 & -     \\
120 & 1.8E-3 & 1.97 \\
 240 & 4.5E-4 & 2.00 \\
 480 & 1.1E-4 & 2.03 \\
 \hline
\end{tabular}
}
\subtable[$K(x,y)=k_0 xy$]{
 \centering
\begin{tabular}{@{}lll@{}}
\hline
 Error $L_1$&\textbf{EOC}\\
\hline
  6.3E-3 & -     \\
 1.6E-3 & 1.98 \\
 4.0E-4 & 2.00 \\
 1.0E-4 & 2.00 \\
 \hline
\end{tabular}
}
\end{tabular}
\label{T:Case21}
\end{minipage}
\hfil
\begin{minipage}{50mm}
\caption{Locally uniform grids (EIC)}
\begin{tabular}{@{}lll@{}}
\centering
\subtable[$K(x,y)=k_0 (x+y)$]{
 \centering
\begin{tabular}{@{}lll@{}}
\hline
 GP & Error $L_1$&\textbf{EOC}\\
\hline
60 & 0.0303 & - \\
 120 & 0.0156 & 0.96 \\
 240 & 7.7E-3 & 1.02 \\
 480 & 3.8E-3 & 1.03 \\
 \hline
\end{tabular}
}
\subtable[$K(x,y)=k_0 xy$]{
 \centering
\begin{tabular}{@{}lll@{}}
\hline
 Error $L_1$&\textbf{EOC}\\
\hline
 0.0145 & - \\
  7.1E-3 & 1.04 \\
 3.3E-3 & 1.08 \\
  1.6E-3 & 1.06 \\
 \hline
\end{tabular}
}
\end{tabular}
\label{T:Case31}
\end{minipage}
\end{table}
\vspace{-.50cm}
\begin{table}[h]\scriptsize
\begin{minipage}{50mm}
\caption{Oscillatory grids (EIC)}
\begin{tabular}{@{}lll@{}}
\centering
\subtable[$K(x,y)=k_0 (x+y)$]{
 \centering
\begin{tabular}{@{}lll@{}}
\hline
 GP & Error $L_1$&\textbf{EOC}\\
\hline
60 & 0.0554 & - \\
 120 & 0.0532 & 0.05 \\
 240 & 0.0539 & -0.01 \\
 480 & 0.0524 & 0.04 \\
 \hline
\end{tabular}
}
\subtable[$K(x,y)=k_0 xy$]{
 \centering
\begin{tabular}{@{}lll@{}}
\hline
 Error $L_1$&\textbf{EOC}\\
\hline
 0.0295 & - \\
  0.0298 & -0.01 \\
 0.0279 & 0.09 \\
  0.0256 & 0.12 \\
 \hline
\end{tabular}
}
\end{tabular}
\label{T:Case41}
\end{minipage}
\hfil
\begin{minipage}{50mm}
\caption{Non-uniform random grids (EIC)}
\begin{tabular}{@{}lll@{}}
\centering
\subtable[$K(x,y)=k_0 (x+y)$]{
 \centering
\begin{tabular}{@{}lll@{}}
\hline
 GP & Error $L_1$&\textbf{EOC}\\
\hline
60 & 0.0246 & - \\
 120 & 0.0292 & -0.25 \\
 240 & 0.0319 & -0.13 \\
 480 & 0.0380 & -0.25 \\
 \hline
\end{tabular}
}
\subtable[$K(x,y)=k_0 xy$]{
 \centering
\begin{tabular}{@{}lll@{}}
\hline
 Error $L_1$&\textbf{EOC}\\
\hline
 0.0162 & - \\
  0.0204 & -0.34 \\
 0.0222 & -0.12 \\
  0.0232 & -0.07 \\
 \hline
\end{tabular}
}
\end{tabular}
\label{T:Case51}
\end{minipage}
\end{table}

\section{Conclusions}\label{conclusion}
A detailed study on the convergence of the fixed pivot technique is given for solving aggregation PBE. It is ascertained that the technique is second order convergent on uniform and non-uniform smooth meshes. However, it is only first order convergent on locally uniform meshes. Finally, the scheme is examined closely on oscillatory and non-uniform random meshes and it is observed that the technique is not convergent. Furthermore, all observations are validated numerically.

\section*{Acknowledgments}
A.K. Giri would like to thank International Max-Planck Research
School, Magdeburg, Germany and FWF Austrian Science Fund
P21622-N18 for their support. We are also grateful to Prof. G.
Warnecke for his valuable discussions.
\vspace{-.40cm}


\begin{thebibliography}{10}
\bibitem{Aldous:1999}
{\sc D.~J. Aldous}, {\it
Deterministic and stochastic model for coalescence (aggregation and coagulation): A review of the mean-field
  theory for probabilists}, Bernoulli, 5
(1999), pp.~3--48.

\bibitem{Ball:1990}
{\sc J.~M. Ball, and J.~Carr}, {\it
The discrete coagulation-fragmentation equations: Existence, uniqueness, and density conservation}, J. Statist. Phys., 61
(1990), pp.~203--234.

\bibitem{Filbet:2007}
{\sc J.~P. Bourgade, and F.~Filbet}, {\it
Convergence of a finite volume scheme for coagulation-fragmentation equations}, Math. Comp., 77
(2007), pp.~851--882.

\bibitem{S_Kumarmulti:2007}
{\sc J.~Chakraborty, and S.~Kumar}, {\it
A new framework for solution of multidimensional population balance equations}, Chem. Eng. Sci., 62
(2007), pp.~4112--4125.

\bibitem{Costa:1998}
{\sc F.~P. Da Costa}, {\it
A finite-dimensional dynamical model for gelation in coagulation processes}, J. Nonlin. Sci., 8
(1998), pp.~619--653.

\bibitem{Drake:1972}
{\sc R.~L. Drake}, {\it
A general mathematical survey of the coagulation equation}, Topics in Current Aerosol Research (Part2), International Reviews in Aerosol Physics and Chemistry, Pergamon Press,
Oxford, UK, 3
(1972), pp.~203--376.

\bibitem{Dubovskii:1994}
{\sc P.~B. Dubovski\v{i}}, {\it
Mathematical Theory of Coagulation, \textit{Lecture notes}}, Global Analysis Research Center, Seoul National university, 23, 1994.

\bibitem{Dubovskii:1992}
{\sc P.~B. Dubovski\v{i}, V.~A. Galkin, and I.~W. Stewart}, {\it
Exact solutions for the coagulation-fragmentation equations}, J. Phys. A: Math. Gen., 25
(1992), pp.~4737--4744.

\bibitem{Dubovskii:1996}
{\sc P.~B. Dubovski\v{i}, and I.~W. Stewart}, {\it
Existence, uniqueness and mass conservation for the coagulation-fragmentation equation}, Math. Meth. Appl. Sci., 19
(1996), pp.~571--591.

\bibitem{Eibeck:2000}
{\sc A.~Eibeck, and W.~Wagner}, {\it
An efficient stochastic algorithm for studying coagulation dynamics and gelation phenomena}, SIAM J. Sci. Comput., 22
(2000), pp.~802--821.

\bibitem{Eibeck:2001}
{\sc A.~Eibeck, and W.~Wagner}, {\it
Stochastic particle approximations for Smoluchowski's coagulation equation}, Ann. Appl. Probab., 11
(2001), pp.~1137--1165.

\bibitem{Escobedo:2003}
{\sc M.~Escobedo, P.~Lauren{\c c}ot, S.~Mischler, and B.~Perthame}, {\it
Gelation and mass conservation in coagulation-fragmentation models}, J. Differ. Equ., 195
(2003), pp.~143--174.

\bibitem{Everson:1997}
{\sc R.~C. Everson, D.~Eyre, and Q.~P. Campbell}, {\it
Spline method for solving continuous batch grinding and similarity equations}, Comput. Chem. Eng., 21
(1997), pp.~1433--1440.

\bibitem{Filbet:2004a}
{\sc F.~Filbet, and P.~Lauren{\c c}ot}, {\it
Mass-conserving solutions and non-conservative approximation to the smoluchowski coagulation equation}, Archiv der Mathematik, 83
(2004), pp.~558--567.

\bibitem{Filbet:2004}
{\sc F.~Filbet, and P.~Lauren{\c c}ot}, {\it
Numerical simulation of the Smoluchowski coagulation equation}, SIAM J. Sci. Comput., 25
(2004), pp.~2004--2028.

\bibitem{Fournier:2005}
{\sc N.~Fournier, and P.~Lauren{\c c}ot}, {\it
Existence of self-similar solutions to {S}moluchowski's coagulation equation}, Comm. Math. Phys., 256
(2005), pp.~589--609.

\bibitem{Fournier:2006}
{\sc N.~Fournier, and P.~Lauren{\c c}ot}, {\it
Well-posedness of {S}moluchowski's coagulation equation for a class of homogeneous kernels}, J. Funct. Anal., 233
(2006), pp.~351--379.

\bibitem{Giri:2010thesis}
{\sc A.~K. Giri}, {\it
Mathematical and numerical analysis for coagulation-fragmentation equations}, PhD Thesis, Otto-von-Guericke-University Magdeburg, Germany, 2010.

\bibitem{GIRI:2010EXT}
{\sc A.~K. Giri, J.~Kumar, and G.~Warnecke}, {\it
The continuous coagulation equation with multiple fragmentation}, J. Math. Anal. Appl., 374
(2011), pp.~71--87.

\bibitem{Hundsdorfer:2003} {\sc W.~Hundsdorfer, and J.~G. Verwer},
{\em Numerical solution of time-dependent advection-diffusion-reaction equations},
Springer-Verlag New York, USA, 1st edition, 2003.

\bibitem{J_Kumar:2006_Thesis}
{\sc J.~Kumar}, {\it
Numerical approximations of population balance equations in particulate systems}, PhD Thesis, Otto-von-Guericke-University Magdeburg, Germany, 2006.

\bibitem{J_Kumar:2007}
{\sc J.~Kumar, M.~Peglow, G.~Warnecke, and S.~Heinrich}, {\it
An efficient numerical technique for solving population balance equation involving aggregation, breakage, growth
  and nucleation}, Powder Technol., 179
(2007), pp.~205--228.

\bibitem{J_Kumar:2006}
{\sc J.~Kumar, M.~Peglow, G.~Warnecke, S.~Heinrich, and L.~M\"{o}rl}, {\it
Improved accuracy and convergence of discretized population balance for aggregation: The cell average
  technique}, Chem. Eng. Sci., 61
(2006), pp.~3327--3342.

\bibitem{JKUMARNM:2008} {\sc J.~Kumar, and G.~Warnecke}, {\it Convergence analysis of sectional methods for solving breakage population balance equations - I: {T}he fixed pivot
technique}, Numer. Math., 111 (2008),
pp.~81--108.

\bibitem{Kumar:1996-1} {\sc S.~Kumar, and D.~Ramkrishna},
{\it On the solution of population balance equations by discretization - I. A fixed pivot technique}, Chem. Eng. Sci.,
51 (1996), pp.~1311--1332.



\bibitem{Kumar:1996-2}
{\sc S.~Kumar, and D.~Ramkrishna}, {\it
On the solution of population balance equations by discretization - II. A moving pivot technique}, Chem. Eng. Sci., 51
(1996), pp.~1333--1342.

\bibitem{Lamb:2004}
{\sc W.~Lamb}, {\it Existence and uniqueness results for the continuous coagulation and fragmentation equation}, Math. Meth. Appl. Sci., 27
(2004), pp.~703--721.

\bibitem{Laurencot:2000}
{\sc P.~Lauren{\c c}ot}, {\em On a class of continuous coagulation- fragmentation equations}, J. Differ. Equ., 167 (2000),
  pp.~245--274.

\bibitem{Laurencot:2002DC}
{\sc P.~Lauren{\c c}ot, and S.~Mischler}, {\em From the discrete to the continuous coagulation-fragmentation equations}, Proc. Roy. Soc. Edinburgh, 132A (2002),
  pp.~1219--1248.

\bibitem{Lee:2000}
{\sc M.~H. Lee}, {\em On the validity of the coagulation equation and the nature of runaway growth}, Icarus, 143 (2000), pp.~74--86.

\bibitem{Linz:1975}
{\sc P.~Linz}, {\em Convergence of a discretization method for integro-differential equations}, Numer. Math., 25 (1975), pp.~103--107.

\bibitem{Mclaughlin:1997I}
{\sc D.~J. McLaughlin, W.~Lamb, and A.~C. McBride}, {\em A semigroup approach to fragmentation models}, SIAM J. Math. Anal., 28 (1997), pp.~1158--1172.

\bibitem{Mclaughlin:1997}
{\sc D.~J. McLaughlin, W.~Lamb, and A.~C. McBride}, {\em An existence and uniqueness result for a coagulation and multiple-fragmentation equation}, SIAM J. Math. Anal., 28 (1997), pp.~1173--1190.

\bibitem{Muller:1928}
{\sc H.~M\"{u}ller}, {\em Zur allgemeinen Theorie der raschen Koagulation}, Kolloidchemische Beihefte, 27 (1928), pp.~223--250.

\bibitem{Nicmanis:1998}
{\sc M.~Nicmanis, and M.~J. Hounslow}, {\em Finite-element methods for steady-state population balance equations},
  AICHE J., 44, 10 (1998), pp.~2258--2272.

\bibitem{Rigopoulos:2003}
{\sc S.~Rigopoulos, and A.~G. Jones}, {\em Finite-element scheme for solution of the dynamic population balance equation}, AICHE J., 49, 5 (2003), pp.~1127--1139.

\bibitem{Scott:1968}
{\sc W.~T. Scott}, {\em Analytic studies of cloud droplet coalescence}, J. Atmos. Sci., 25 (1968), pp.~54--65.

\bibitem{Stewart:1990}
{\sc I.~W. Stewart}, {\em A global existence theorem for the general coagulation-fragmentation equation with unbounded kernels}, Math. Meth. Appl. Sci., 11 (1989), pp.~627--648.

\bibitem{Su:2007}
{\sc J.~Su, Z.~Gu, Y.~Li, S.~Feng and X.~Y. Xu}, {\em Solution of population balance equation using quadrature method of moments with an adjustable factor}, Chem. Eng. Sci., 62 (2007),
  pp.~5897--5911.

\bibitem{Vanni:2002}
{\sc M.~Vanni}, {\em Approximate population balance equations for aggregation-breakage processes}, J. Colloid Interface Sci., 221 (2002),  pp.~143--160.

\end{thebibliography}
\end{document}